\newtheorem{constr}{Construction}
\newcommand{\PP}{\mathbf{P}}
\newcommand{\OO}{\mathcal{O}}
\newcommand{\Ncal}{\mathcal{N}}
\newcommand{\family}{\underline{\Lambda}}
\newcommand{\Hom}{\mathrm{Hom}}
\newcommand{\rank}{\mathrm{rk}}
\newcommand{\Hilb}{\text{\upshape{Hilb}}}
\newcommand{\Spec}{\mathrm{Spec}}
\newcommand{\codim}{\mathrm{codim}}
\newcommand{\Supp}{\mathrm{Supp}}
\newcommand{\rk}{\mathrm{rank}}
\begin{document}

\title*{Focal schemes to families of secant spaces to canonical curves}
\author{Michael Hoff}
\institute{Michael Hoff \at Universit\"at des Saarlandes, Campus E2 4, D-66123 Saarbr\"ucken, Germany
\\
\email{hahn@math.uni-sb.de}}
%
%
\maketitle

\abstract*{For a general canonically embedded curve $C$ of genus $g\geq 5$, let $d\le g-1$ be an integer such that the Brill--Noether number $\rho(g,d,1)=g-2(g-d+1)\geq 1$. We study the family of $d$-secant $\PP^{d-2}$'s to $C$ induced by the smooth locus of the Brill--Noether locus $W^1_d(C)$. 
Using the theory of foci and a structure theorem for the rank one locus of special $1$-generic matrices by Eisenbud and Harris, we prove a Torelli-type theorem for general curves by reconstructing the curve from its Brill--Noether loci $W^1_d(C)$ of dimension at least $1$.}

\abstract{
For a general canonically embedded curve $C$ of genus $g\geq 5$, let $d\le g-1$ be an integer such that the Brill--Noether number $\rho(g,d,1)=g-2(g-d+1)\geq 1$. We study the family of $d$-secant $\PP^{d-2}$'s to $C$ induced by the smooth locus of the Brill--Noether locus $W^1_d(C)$. 
Using the theory of foci and a structure theorem for the rank one locus of special $1$-generic matrices by Eisenbud and Harris, we prove a Torelli-type theorem for general curves by reconstructing the curve from its Brill--Noether loci $W^1_d(C)$  of dimension at least $1$. 
}

\keywords{Focal scheme, Brill-Noether locus, Torelli-type theorem}

\noindent\subclassname{14H51, 14M12, 14C34}

\section{Introduction and motivation}

For a general canonically embedded curve $C$ of genus $g\ge 5$ over $\mathbf{C}$, we study the local structure of the Brill--Noether locus $W^1_d(C)$ for an integer $\lceil \frac{g+3}{2} \rceil \le d\le g-1$. 
Our main object of interest is the focal scheme associated to the family of $d$-secant $\PP^{d-2}$'s to $C$. The focal scheme arises in a natural way as the degeneracy locus of a map of locally free sheaves associated to a family of secant spaces to a curve. In other words, the focal scheme (or the scheme of first-order foci) consists of all points where a secant intersects its infinitesimal first-order deformation. 

In \cite{CS92} and \cite{CS95}, Ciliberto and Sernesi studied the geometry of the focal scheme associated 
to the family of $(g-1)$-secant $\PP^{g-3}$'s induced by the singular locus $W^1_{g-1}(C)$ of the theta divisor, and they gave a conceptual new proof of Torelli's theorem.  
Using higher-order focal schemes for general canonical curves of genus $g=2m+1$, 
they showed in \cite{CS00} that the family of $(m+2)$-secants induced by $W^1_{m+2}(C)$ also determines the curve. These are the extremal cases, that is, the degree $d$ is maximal or minimal with respect to the genus $g$ (in symbols $d=g-1$ or $d=\frac{g+3}{2}$ and $g$ odd). 
The article \cite{Baj10} of Bajravani can be seen as a first extension of the previous results to another Brill--Noether locus ($g=8$ and $d=6=\lceil \frac{g+3}{2} \rceil$). 

Combining methods of \cite{CS95},\cite{CS00} and \cite{CS10}, we will give a unified proof which shows that the canonical curve is contained in the focal schemes parametrised by the smooth locus of any $W^1_d(C)$ if $d\leq g-1$ and $\rho(g,d,1)=g-2(g-d+1)\geq 1$. Moreover, we have the following Torelli-type theorem (see also Corollary \ref{corollaryRecovering}). 

\begin{theorem}
A general canonically embedded curve of genus $g$ can be reconstructed from its Brill--Noether locus $W^1_d(C)$ if $\lceil \frac{g+3}{2} \rceil \le d\le g-1$. 
\end{theorem}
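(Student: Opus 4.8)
The plan is to reconstruct the canonical curve $C \subset \PP^{g-1}$ from the family of secant spaces encoded by $W^1_d(C)$, and the engine will be the focal scheme. The key geometric principle, established in the body of the paper, is that $C$ itself lies inside the focal locus of the family of $d$-secant $\PP^{d-2}$'s parametrised by the smooth locus of $W^1_d(C)$. So the strategy is: first recover enough of the secant family to pin down the focal scheme, and then extract $C$ as a distinguished component of that scheme.

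\medskip

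\noindent\textbf{Step 1: Recover the secant family intrinsically.} I would start from the abstract variety $W := W^1_d(C)$, equipped only with its own geometry (not a prior embedding of $C$). A point of $W$ corresponds to a complete $g^1_d$, i.e.\ a line bundle $L$ with $h^0(L)\ge 2$; by Riemann--Roch the residual series $K_C \otimes L^{-1}$ gives a map to a $\PP^{d-2}$ that is a $d$-secant space to the canonical model. The first task is to show that this $\PP^{d-2}$, together with its variation over $W$, is determined by intrinsic data on $W$, for instance by the differential-geometric structure (tangent/cotangent directions) that the theory of foci reads off. Concretely, I would realise the family of secants as the degeneracy locus of a map of locally free sheaves over the smooth locus $W_{\mathrm{sm}}$, exactly the setup that defines the focal scheme.

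\medskip

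\noindent\textbf{Step 2: Form the focal scheme and locate $C$.} With the secant family in hand, I would form its focal scheme $F$, the scheme of first-order foci, as the degeneracy locus of the characteristic map. The structural input here is the Eisenbud--Harris structure theorem for the rank-one locus of special $1$-generic matrices, which controls precisely how the focal locus decomposes. Using this together with the main result proved earlier in the paper, I obtain that $C$ appears as an irreducible component of $F$ (the ``$C$ is contained in the focal scheme'' statement). The delicate point is to argue that $C$ is a \emph{distinguished} component --- e.g.\ the unique one of a given dimension and degree, or the locus cut out after removing the excess/extraneous components --- so that $C$ can be singled out canonically rather than merely known to be present.

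\medskip

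\noindent\textbf{Step 3: From $C \subset \PP^{g-1}$ back to the abstract curve, and genericity.} Once $C$ is recovered as a subscheme of the ambient $\PP^{g-1}$, the canonical curve determines the abstract curve by the classical reconstruction of a non-hyperelliptic curve from its canonical image. It remains to check that the construction is uniform across the full range $\lceil \frac{g+3}{2}\rceil \le d \le g-1$: the extremal cases $d=g-1$ and ($g$ odd, $d=\frac{g+3}{2}$) are due to Ciliberto--Sernesi, and the point is that the focal-scheme argument degenerates gracefully to those cases while covering the intermediate ones uniformly. I expect the main obstacle to be Step 2, specifically controlling the possible extraneous components of the focal scheme: one must ensure that for a general curve the focus is not so large or so degenerate that $C$ cannot be separated from spurious focal loci, which is exactly where generality of $C$ (dimension counts, Brill--Noether general behaviour giving $\dim W^1_d(C)=\rho(g,d,1)$, and the $1$-genericity hypotheses feeding the Eisenbud--Harris theorem) must be invoked.
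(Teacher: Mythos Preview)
Your outline captures the broad architecture, but Step 2 contains a genuine gap: you treat the first-order focal scheme as something from which $C$ can be extracted as a distinguished irreducible component, and that is not how the argument runs. The Eisenbud--Harris theorem yields a dichotomy for the rank-one locus $V(\chi_s)_1$ at a general $s$: it is either the reduced divisor $D_s$, or a rational normal curve $\Gamma_s \subset \Lambda_s$ through $D_s$. In the first case you are done. In the second case, however, $D_s$ is not a component of $V(\chi_s)_1$ --- it is $d$ points sitting on an irreducible smooth curve. Varying $s$, the first-order rank-one locus sweeps out a variety $\Gamma_{\PP^{g-1}}$ of dimension at least $3$ containing $C$, and there is no way to single $C$ out from it by the kind of dimension/degree bookkeeping you suggest. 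The paper in fact does not know (and conjectures the answer depends on $(g,d)$) which branch of the dichotomy occurs for a general curve, so the proof must handle the rational-normal-curve case.

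The remedy, which is the missing idea in your proposal, is to iterate the focal construction. One forms the family $\underline{\Gamma}$ of rational normal curves $\Gamma_s$ and computes its own characteristic map $\psi$; the resulting \emph{second-order} foci $V(\psi_s)_1 \subset \Gamma_s$ are then shown to be finite of degree at most $d+\rho$. A separate argument identifies them precisely: $V(\psi_s)_1$ is $D_s$ together with the $\rho$ points where $\Gamma_s$ meets the vertex $V = \PP T_L W^1_d(C)$ of the scroll $X_L$. Since this vertex is intrinsically the projectivised tangent space to $W^1_d(C)$ at $L$, the extraneous $\rho$ points are recognisable and can be discarded, leaving $D_s$; varying $s$ then recovers $C$. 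So the ``spurious component'' problem you anticipate is real, but it is solved not by a component-selection argument on the first-order scheme, rather by passing to second order and using that the residual locus lands in the tangent space of $W^1_d(C)$.
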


In \cite{PTiB}, G. Pirola and M. Teixidor i Bigas proved a generic Torelli-type theorem for $W^r_d(C)$ 
if $\rho(g,d,r)\ge 2$, or $\rho(g,d,r)=1$ and $r=1$. 
Whereas they used the global geometry of the Brill--Noether locus to recover the curve, 
our theorem is based on the local structure around a smooth point of $W^1_d(C)\subset W_d(C)$. 
Only first-order deformations are needed. 

Our proof follows \cite{CS00}. 
We show that the first-order focal map is in general $1$-generic and 
apply a result of D. Eisenbud and J. Harris \cite{EH91} 
in order to describe the rank one locus of the focal matrix. Two cases are possible. The rank one locus of the focal matrix consists either of the support of a divisor $D$ of degree $d$ corresponding to a line bundle $\OO_C(D)\in W^1_d(C)$ or of a rational normal curve. Even if we are not able to decide which case should occur on a general curve (see Section \ref{firstorderFocalMap} for a discussion), we finish our proof by studying focal schemes to a family of rational normal curves induced by the first-order focal map.

In Section \ref{focitheory}, 
we recall the definition of focal schemes as well as general facts and known results about focal schemes. 
Section \ref{FociHigherDimBNLoci} is devoted to prove the generalisation of the main theorem of \cite{CS00} 
to arbitrary positive dimensional Brill--Noether loci. 

\section{The theory of foci}
\label{focitheory}
We recall the definition as well as the construction of the family of $d$-secant $\PP^{d-2}$'s 
induced by an open dense subset of $C^1_d$. 
Afterwards we introduce the characteristic or focal map and 
define the scheme of first-order foci of rank $k$ associated to the above family. 
We give a sligthly generalised definition of the scheme of first- and second-order foci compared to \cite{CS95}. 
In Section \ref{propFocalScheme}, we recall the basic properties of the scheme of first-order foci. 
Our approach follows \cite{CS10}.

\subsection{Definition of the scheme of first-order foci}

 Let $C$ be a Brill--Noether general canonically embedded curve of genus $g\geq 5$, 
 and let $d\le g-1$ be an integer such that the Brill--Noether number $\rho:=\rho(g,d,1)=g-2(g-d+1)\geq 1$. 
 Let $C^1_d$ be the variety parametrising effective divisors of degree $d$ on $C$ moving in a linear system of dimension at least $1$ (see \cite[IV, \S 1]{ACGH}). Let $\Sigma\subset W^1_d(C)$ be the smooth locus of $W^1_d(C)$. Furthermore, let $\alpha_d:C^1_d \to W^1_d(C)$ be the Abel-Jacobi map (see \cite[I, \S 3]{ACGH}) and let $S=\alpha_d^{-1}(\Sigma)$. 
 Then $\alpha: S \to \Sigma$ is a $\PP^1$-bundle, and in particular $S$ is smooth of pure dimension $\rho +1$.
 For every $s\in S$, we denote by $D_s$ the divisor of degree $d$ on $C$ defined by $s$ and 
 $\Lambda_s=\overline{D_s}\subset\PP^{g-1}$ its linear span, which is a $d$-secant $\PP^{d-2}$ to $C$. 
 We get a $(\rho +1)$-dimensional family of $d$-secant $\PP^{d-2}$'s parametrised by $S$: 
 $$
 \begin{xy}
  \xymatrix{
  \family \subset S\times \PP^{g-1} \ar[d]^{p} \ar[r]^{\ \ \ \ q} & \PP^{g-1} \\
  S & 
  }
 \end{xy}
 $$
 We denote by $f:\family \to \PP^{g-1}$ the induced map. 

\begin{constr}[of the family $\family$]
 Let ${\bf D}_d\subset C_d\times C$ be the universal divisor of degree $d$ and let ${\bf D}_S\subset S\times C$ 
 be its restriction to $S\times C$. We denote by $\pi:S\times C \to S$ the projection. 
 We consider the short exact sequence
 $$
  0 \to \OO_{S\times C} \to \OO_{S\times C}({\bf D}_S) \to \OO_{{\bf D}_S}({\bf D}_S) \to 0.
 $$
 By Grauert's Theorem, the higher direct image $R^1\pi_*(\OO_{{\bf D}_S}({\bf D}_S)=0$ vanishes 
 and we get a map of locally free sheaves on $S$
 $$
  R^1\pi_*(\OO_{S\times C})\to R^1\pi_*(\OO_{S\times C}({\bf D}_S)) \to 0
 $$
 whose kernel is a locally free sheaf $\mathcal{F}\subset R^1\pi_*(\OO_{S\times C})\cong \OO_S\otimes H^1(C,\OO_C)$ 
 of rank $d-1 = g - (g-d+1)$. The family $\family$ is the associated projective bundle 
 $$
 \family=\PP(\mathcal{F})\subset S\times \PP^{g-1}.
 $$
\end{constr}

\begin{remark}\label{familyAsRulingOfTangentCones}
We can also construct the family $\family$ from the Brill--Noether locus $W_d(C)$ and its singular locus $W^1_d(C)$. At a singular point $L\in W^1_d(C)\backslash W^2_d(C)$, the projectivised tangent cone to $W_d(C)$ at $L$ in the canonical space $\PP^{g-1}$ coincides with the scroll 
$$
X_L=\bigcup_{D\in |L|} \overline{D}
$$ 
swept out by the pencil $g^1_d=|L|$. Hence, the ruling of $X_L$ is the one-dimensional family of secants induced by $|L|$. Varying the point $L$ yields the family $\family$. See also \cite[Theorem 1.2]{CS95}. We conclude that the family $\family$ is determined by $W_d(C)$ and its singular locus $W^1_d(C)$. 
\end{remark}

In order to define the first-order focal map of the family $\family$, we make a short digression. We consider a flat family $F$ of closed subschemes of a projective scheme $X$ over a base $B$, that is, 
 $$
 \begin{xy}
  \xymatrix{
  F \subset B\times X \ar[d]^{\pi_1} \ar[r]^{\ \ \ \ \pi_2} & X \\
  B & 
  }
 \end{xy}
 $$
Let $T(\pi_1)|_F := \pi_1^*(T_B)|_F$ be the tangent sheaf along the fibers of $\pi_2$ restricted to the family $F$ and let $\Ncal_{F/B\times X}$ be the normal sheaf of $F\subset B\times X$. There is a map 
$$
\psi: T(\pi_1)|_F \to \Ncal_{F/B\times X}
$$
called the \emph{global characteristic map} of the family $F$ which is defined by the following exact and commutative diagram:
$$
\begin{xy}
 \xymatrix{
  & & 0 \ar[d] & & \\
  & & T(\pi_1)|_{F} \ar[r]^{\psi} \ar[d] & \Ncal_{F/B\times X} \ar@{=}[d] & \\
 0 \ar[r] & T_{F} \ar[r] \ar[d]^{d(\pi_2|_F)} & T_{B\times X}|_{F} \ar[r] \ar[d] & \Ncal_{F/B\times X} \ar[r] & 0 \\ 
  & (\pi_2|_F)^{*}(T_{X}) \ar@{=}[r] & \pi_2^{*} (T_{X})|_F & & 
 }
\end{xy}
$$
For every $b\in B$ the homomorphism $\psi$ induces a homomorphism 
$$
\psi_b:T_{B,b}\otimes \OO_{\pi_1^{-1}(b)} \to \Ncal_{\pi_1^{-1}(b)/X}
$$
called the \emph{(local) characteristic map} of the family $F$ at a point $b$.
Since $F$ is a flat family, we get a classifying morphism 
$$
\varphi:B \to \Hilb_Y
$$
by the universal property of the Hilbert scheme $\Hilb_Y$. 
The linear map induced by the characteristic map
$$
H^0(\psi_b): T_{B,b}\to H^0(\Ncal_{\pi_1^{-1}(b)/X})
$$
is the differential $d\varphi_b$ at the point $b$ (see also \cite[p. 198 f]{Ful84}). 
Assuming that $B$, $Y$ and the family $F$ are smooth, all sheaves in the above diagram are locally free and by diagram-chasing, it follows that 
$$
\ker(d(\pi_2|_F))=\ker(\psi) \ \ \ \text{ and } \ \ \ 
\dim (\pi_2(F))=\dim(F)-\rank(\ker(\psi)).
$$

We come back to the smooth family $\family$ and fix some notation for the rest of the article. 
Let 
$
\Ncal:=\Ncal_{\family/S\times \PP^{g-1}}
$ 
be the normal bundle of $\family$ in $S\times \PP^{g-1}$ and let 
$
T(p)|_{\family}:=p^{*}(T_{S})|_{\family}
$
be the restriction of the tangent bundle along the fibers of $q$ to $\family$. 
Let 
$$
\chi: T(p)|_{\family} \to \Ncal
$$
be the global charecteristic map defined as above. 
For every $s\in S$ the homomorphism $\chi$ induces a homomorphism 
$$
\chi_s:T_{S,s}\otimes \OO_{\Lambda_s} \to \Ncal_{\Lambda_s/\PP^{g-1}}
$$
also called the \emph{characteristic map} or \emph{first-order focal map} of the family $\family$ at a point $s$.

\begin{remark}
  Fix an $s\in S$. We have $\Lambda_s=\PP(U)$, where $U\subset V=H^1(C,\OO_C)$ is a vector subspace 
  of dimension $d-1$. The normal bundle of $\Lambda_s$ in $\PP^{g-1}$ is given by 
  $$
  \Ncal_{\Lambda_s/\PP^{g-1}}= V/U \otimes \OO_{\Lambda_s}(1)
  $$
  and
  $$
  H^0(\Lambda_s,\Ncal_{\Lambda_s/\PP^{g-1}})=\Hom(U,V/U).
  $$
  The characteristic map is of the form 
  $$
  \chi_s:T_{S,s}\otimes \OO_{\Lambda_s}\to V/U \otimes \OO_{\Lambda_s}(1).
  $$
  Hence, it is given by a matrix of linear form on $\Lambda_s$.
\end{remark}

We  define the first- and the second-order foci (of rank $k$) of a family $\family$.  

\begin{definition}
\label{defFociLoci}\hspace{1em}
\begin{enumerate}
 \item [(a)] Let $V(\chi)_k$ be the closed subscheme of $\family$ defined by  
$$
V(\chi)_k=\{p\in\family|\ \rank(\chi(p))\leq k\}. 
$$
Then, $V(\chi)_k$ is the \textit{scheme of first-order foci of rank $k$} and the fiber of $V(\chi)_k$ over a point $s\in S$ 
$$
(V(\chi)_k)_s=V(\chi_s)_k \subset \Lambda_s
$$
is the \textit{scheme of first-order foci of rank $k$ at $s$}. 
\item [(b)] Assume that $V(\chi)_k$ induces a family of rational normal curves $\underline{\Gamma}$, that is, for a general $s\in S$ the fiber $\Gamma_s=V(\chi_s)_k$ is a rational normal curve. 
Let $\psi$ be the global characteristic map of $\underline{\Gamma}$. We call the first-order foci of rank $k$ of the family $\underline{\Gamma}$, that is, 
$$
V(\psi)_k=\{p\in\underline{\Gamma}|\ \rank(\psi(p))\leq k\}
$$ 
the \emph{second-order foci of rank $k$} of the family $\family$.
\end{enumerate}
\end{definition}

\begin{remark}
Our definition of scheme of first-order foci is a slight generalisation of the definition given in \cite{CS92}, \cite{CS95}. Note that if 
$$k=\min\{\rank(T(p)|_{\family}),\rank(\Ncal)\}-1=\min\{\codim S, \codim_{S\times \PP^{g-1}}(\family)\}-1,
$$ 
we get the classical definition of first-order foci. Furthermore, our definition of the second-order foci of rank $k$ is inspired by the definition of higher-order foci of \cite{CS10}.  
\end{remark}

\begin{remark} 
\begin{enumerate}
 \item [(a)] The equality $V(\chi)_s=V(\chi_s)$ is shown in \cite[Proposition 14]{CC93}.
 \item [(b)] If $\chi$ has maximal rank, that is, if $\chi$ is either injective or has torsion cokernel, then 
 $V(\chi)_k$ is a proper closed subscheme of $\family$ for $k\le \min\{\rank(T(p)|_{\family}),\rank(\Ncal)\}-1$. 
 \item [(c)] In Section \ref{FociHigherDimBNLoci} we study the scheme of first-order foci of rank $1$ of the family $\family$. 
\end{enumerate}

\end{remark}

\subsection{Properties of the scheme of first-order foci}
\label{propFocalScheme}

We assume in this section that $C$ is a Brill--Noether general curve. The following proposition is proven in \cite{CS95} which can be easily generalised to the case of divisors of degree $d < g-1$. 

\begin{proposition}
\label{curveInFocalScheme}
For $s\in S$, we have 
$$
D_s\subset V(\chi_s)_1.
$$
In particular, the canonical curve $C$ is contained in the scheme of first-order foci.
\end{proposition}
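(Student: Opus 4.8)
The plan is to show that every point of the divisor $D_s$ forces the characteristic map $\chi_s$ to drop rank to at most $1$. Recall that $\chi_s$ is a map of sheaves on $\Lambda_s = \PP(U)$ of the form
$$
\chi_s: T_{S,s}\otimes \OO_{\Lambda_s} \to V/U \otimes \OO_{\Lambda_s}(1),
$$
given by a matrix of linear forms on $\Lambda_s$. A point $x\in D_s$ is a point of $C$ lying on the secant space $\Lambda_s$, so it corresponds to a one-dimensional subspace of $U\subset V=H^1(C,\OO_C)$. I would evaluate $\chi_s$ at such a point and analyze the image of the induced linear map $T_{S,s}\to (V/U)(1)|_x \cong V/U$.

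The key geometric input is the description of the tangent space $T_{S,s}$ in terms of deformation theory of the pencil. Since $\alpha:S\to\Sigma$ is a $\PP^1$-bundle over the smooth locus of $W^1_d(C)$, the tangent space $T_{S,s}$ splits (or at least surjects/injects compatibly) into the directions deforming the line bundle $L=\OO_C(D_s)$ inside $W^1_d(C)$ and the direction moving the divisor within the pencil $|L|$. The tangent space to $W_d(C)$ at $L$ is computed by Riemann--Roch and the base-point-free pencil trick: $T_L W_d(C) = H^0(C,K_C\otimes L^{-1})^\vee \subset H^1(C,\OO_C) = V$ via the cup-product/multiplication map (see \cite[IV]{ACGH}). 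The crucial point is that the deformations of $D_s$ coming from moving $L$ in $W^1_d(C)$ act on the point $x\in D_s$ in a way governed by multiplication of sections, and at a point of $C$ this multiplication map has image landing in a fixed line of $V/U$ — this is precisely the first-order focus condition. I expect the cleanest route is to identify $\chi_s$ evaluated at $x\in D_s$ with a cup-product pairing and to show its rank there is exactly $1$, reducing to the rank-one statement for the relevant multiplication map on $C$.

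The main obstacle, and the step requiring the most care, is making the identification of the characteristic map $\chi_s$ with the explicit cohomological multiplication map precise, and then verifying that at a point $x$ of the curve the image of this map collapses to a single line rather than merely dropping by one. Concretely I would: (i) unwind the Construction of $\family$ to express $\chi_s$ via the connecting/coboundary maps coming from the short exact sequence $0\to\OO_{S\times C}\to\OO_{S\times C}(\mathbf{D}_S)\to\OO_{\mathbf{D}_S}(\mathbf{D}_S)\to 0$ and the Kodaira--Spencer description of $T_{S,s}$; (ii) restrict to a point $x\in D_s\subset\Lambda_s$ and compute the rank of the resulting linear map on $T_{S,s}$; (iii) check that a point of the curve contributes an extra rank-drop compared to a generic point of $\Lambda_s$, yielding rank $\le 1$ there. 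Since the argument of \cite{CS95} establishes exactly this in the case $d=g-1$ and the construction above is uniform in $d$, the final step is to observe that none of these local computations uses $d=g-1$ in an essential way — the only inputs are the $\PP^1$-bundle structure of $S\to\Sigma$, the Riemann--Roch description of $T_L W_d(C)$, and the base-point-free pencil trick, all of which hold verbatim for $d\le g-1$ with $\rho(g,d,1)\ge 1$. The inclusion $D_s\subset V(\chi_s)_1$ for all $s$ then gives $C\subset V(\chi)_1$ as claimed.
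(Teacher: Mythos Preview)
Your cohomological plan could in principle be carried out, but it is far more elaborate than necessary, and you yourself flag the crucial identification of $\chi_s$ with an explicit cup-product map as the ``main obstacle'' without actually performing it. The paper bypasses all of this machinery with a one-line geometric observation.

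The paper's argument is simply this: for $p\in\Supp(D_s)$, the locus $\{s'\in S : p\in D_{s'}\}$ (the intersection of $S$ with $p+C_{d-1}$) has codimension~$1$ in $S$, so its tangent space at $s$ is a hyperplane $T\subset T_{S,s}$. Along any deformation in $T$, the secant $\Lambda_{s'}$ continues to pass through $p$; hence the normal velocity of $\Lambda_s$ at $p$ vanishes for every direction in $T$, i.e.\ $\chi_s(p)|_T=0$. Since $\dim T=\rho$, the rank of $\chi_s$ at $p$ is at most~$1$.

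What this buys compared to your approach: no need to identify the characteristic map with any Kodaira--Spencer or coboundary map, no base-point-free pencil trick, and no splitting of $T_{S,s}$ into ``pencil'' and ``Brill--Noether'' directions. The only input is the tautological fact that the characteristic map measures the normal displacement of the fiber under first-order deformation, and that displacement is zero at a point which remains on all nearby fibers. Your route would eventually recover the same rank bound, but through a computation you have not actually carried out; the geometric argument makes the result immediate and transparent.
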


\begin{proof}
Let $p\in \Supp(D_s)$. Then there exists a codimension $1$ family of effective divisors and hence $d$-secants 
containing the point $p$. Therefore, there is a codimension $1$ subspace $T\subset T_{S,s}$ such that 
the map $\chi_s(p)|_T$ is zero. 
We conclude that the focal map $\chi_s$ has rank at most $1$ in points of $\Supp(D_s)$. \hspace{5.9cm} \qed
\end{proof}


An important step in the proof of our main theorem is to show that the first-order focal map $\chi_s$ is 
$1$-generic. The general definition of $1$-genericity can be found in \cite{Eis88}. 
In our case, a reformulation of the definition is the following. 

\begin{proposition}
\label{reformulation1generic}
 The matrix $\chi_s$ is $1$-generic if and only if for each nonzero element $v\in T_{S,s}$, 
 the homomorphism 
 $$
 H^0(\chi_s)(v) \in \Hom(U,V/U)
 $$
 is surjective.
\end{proposition}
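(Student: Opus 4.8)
The plan is to prove the equivalence by unwinding Eisenbud's definition of $1$-genericity and matching it with the stated surjectivity condition through elementary linear algebra on the target fibre $V/U$.

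First I would record the tensorial description of the focal map. By the preceding remark, $\chi_s$ is a morphism of sheaves on $\Lambda_s=\PP(U)$ of the shape $T_{S,s}\otimes\OO_{\Lambda_s}\to (V/U)\otimes\OO_{\Lambda_s}(1)$. Since source and target are twists of trivial bundles, $\chi_s$ is determined by the corresponding global section of the sheaf of homomorphisms, namely by an element of
$$
\Hom_{\OO_{\Lambda_s}}\!\big(T_{S,s}\otimes\OO_{\Lambda_s},\,(V/U)\otimes\OO_{\Lambda_s}(1)\big)=T_{S,s}^{*}\otimes (V/U)\otimes U^{*},
$$
where I used $H^0(\Lambda_s,\OO_{\Lambda_s}(1))=U^{*}$. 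Thus $\chi_s$ is a three-tensor, that is, a matrix of linear forms on $\Lambda_s$ whose rows are indexed by a basis of $T_{S,s}$ and whose columns are indexed by a basis of $V/U$, with entries in $U^{*}$. Contracting this tensor against a vector $v\in T_{S,s}$ returns precisely the homomorphism $H^0(\chi_s)(v)\in (V/U)\otimes U^{*}=\Hom(U,V/U)$.

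Next I would invoke the definition of $1$-genericity in the form used in \cite{Eis88}: the matrix $\chi_s$ is $1$-generic exactly when every generalised entry is nonzero, where a generalised entry is the linear form obtained by pairing a nonzero generalised row $v\in T_{S,s}$ with a nonzero generalised column $\ell\in (V/U)^{*}$. Under the identification above this generalised entry is the composite $\ell\circ H^0(\chi_s)(v)\in U^{*}$. The remaining step is then a linear-algebra equivalence on $V/U$: the form $\ell\circ H^0(\chi_s)(v)$ vanishes identically if and only if $\Image(H^0(\chi_s)(v))$ is contained in the hyperplane $\ker(\ell)$. Hence a nonzero $\ell$ annihilating a fixed $v$ exists if and only if $\Image(H^0(\chi_s)(v))$ lies in some hyperplane of $V/U$, i.e. if and only if $H^0(\chi_s)(v)\colon U\to V/U$ fails to be surjective. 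Negating this over all nonzero $v$ yields the assertion.

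I do not anticipate a genuine obstacle, since the statement is essentially a reformulation of the definition; the only points requiring care are the correct bookkeeping of the three tensor factors (in particular that the generalised columns range over $(V/U)^{*}$ rather than $V/U$) and the standard fact that a linear map is surjective precisely when no nonzero functional vanishes on its image.
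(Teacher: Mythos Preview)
Your argument is correct. The paper does not supply a proof of this proposition at all; it simply presents the statement as a reformulation of Eisenbud's definition of $1$-genericity and refers the reader to \cite{Eis88}. Your proposal unwinds that definition explicitly and matches it with the surjectivity condition via the obvious linear-algebra observation, which is exactly the intended content of the word ``reformulation.'' The only remark worth making is that your labelling of rows and columns is a convention (one could equally view the rows as indexed by $(V/U)^{*}$ and the columns by $T_{S,s}$, since the focal matrix is $n\times(\rho+1)$ in the paper's later computations), but this is immaterial to the argument since $1$-genericity is symmetric in rows and columns.
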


We recall what is known about the $1$-genericity of the matrix $\chi_s$.

\begin{proposition}[{\cite[Theorem 2.5]{CS95}, \cite[Theorem 2]{CS00}, \cite{Baj10}}]
 Let $s\in S$ be a general point.
 \begin{enumerate}
  \item [(a)] If $D_s$ is a divisor of degree $g-1$ cut on $C$ by $\Lambda_s$, 
  then the matrix $\chi_s$ is $1$-generic (equivalently, $V(\chi_s)_1$ is a rational normal curve) 
  if and only if the pencil $|D_s|$ is base point free. 
  \item [(b)] If $\rho=\rho(g,d,1)=1$, then the matrix $\chi_s$ is $1$-generic 
  (equivalently, $V(\chi_s)_1$ is a rational normal curve). 
  \item [(c)] If $g=8$ and $d=6$, then the matrix $\chi_s$ is $1$-generic.
  \end{enumerate}
\end{proposition}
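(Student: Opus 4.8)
The plan is to establish $1$-genericity in each of the three cases through the criterion of Proposition \ref{reformulation1generic}: it suffices to show that for every nonzero $v \in T_{S,s}$ the induced homomorphism $H^0(\chi_s)(v) \in \Hom(U, V/U)$ is surjective, where $U \subset V = H^1(C,\OO_C)$ is the $(d-1)$-dimensional span of $D_s$. Since $\lceil \frac{g+3}{2}\rceil \le d$ forces $\dim U = d-1 > g-d+1 = \rank(V/U)$, surjectivity is the same as saying $H^0(\chi_s)(v)$ has maximal rank $g-d+1$. My aim is to derive all three statements uniformly from a single explicit description of $\chi_s$.

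First I would make the characteristic map explicit. Using the $\PP^1$-bundle structure $\alpha: S \to \Sigma$, I split $T_{S,s}$ into the vertical direction tangent to the pencil $|L|=\PP(H^0(L))$ and the horizontal part, which I identify with the Brill--Noether tangent space $T_{\Sigma,L}=(\Image \mu_0)^\perp \subset H^1(C,\OO_C)$, where $\mu_0: H^0(L)\otimes H^0(K_C \otimes L^{-1}) \to H^0(K_C)$ is the Petri map (see \cite[IV, \S 4]{ACGH}). Dualizing the target through the identifications $(V/U)^{\vee}=U^{\perp}=H^0(K_C\otimes L^{-1})$ and $U^{\vee}=H^0(K_C)/H^0(K_C\otimes L^{-1})$, the transpose of $H^0(\chi_s)(v)$ becomes a homomorphism
$$
H^0(K_C\otimes L^{-1}) \longrightarrow H^0(K_C)/H^0(K_C\otimes L^{-1})
$$
built from the Petri multiplication $\mu_0$, with the tangent vector $v$ entering through the first-order variation of the section $s_0\in H^0(L)$ cutting out $D_s$; this is the computation of \cite[Theorem 2.5]{CS95}. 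The upshot is that $1$-genericity is governed by $\mu_0$ together with the base locus of the pencil $|L|$.

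With this description the three cases reduce to concrete linear algebra. For (a), $d=g-1$ gives $\rank(V/U)=2$, so $\chi_s$ is a $2\times(g-3)$ matrix; by Remark \ref{familyAsRulingOfTangentCones} its rank-$1$ locus is cut out by the scroll $X_L$, the projectivized tangent cone to $W_{g-1}(C)=\Theta$ at the double point $L$. Surjectivity of $H^0(\chi_s)(v)$ fails for some $v$ exactly when the two sections appearing in the relevant multiplication share a common zero, that is, when $|L|$ has a base point; conversely, base-point-freeness forces every $H^0(\chi_s)(v)$ onto, giving the ``if and only if'' of (a). For (b), $\rho=1$ makes $T_{S,s}$ only $2$-dimensional (the vertical direction and a single horizontal one), and for (c), $g=8,\,d=6$ gives $\rho=2$, so $\chi_s$ is a $3\times 3$ matrix of linear forms on $\PP^4$. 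In both cases there are very few directions $v$ to test, and I would verify maximal rank of $H^0(\chi_s)(v)$ directly from the explicit form above, invoking the injectivity of $\mu_0$ on a Petri-general curve to exclude the degenerate horizontal directions.

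The main obstacle is the explicit identification of $\chi_s$ and, for part (a), translating ``$H^0(\chi_s)(v)$ fails to be surjective for some $v$'' into ``$|L|$ has a base point'' in both directions. The delicate point is that failure of $1$-genericity is a condition on all of $T_{S,s}$ at once: from a base point I must produce a single nonzero $v$ annihilating an entire line in $V/U$, and conversely rule out such a $v$ when $|L|$ is base-point-free. The control of the horizontal directions via $(\Image \mu_0)^{\perp}$ and Petri injectivity is precisely what makes the argument go through in the small cases (b) and (c), whereas for general $d$ the corresponding analysis is not available (cf. the discussion in Section \ref{firstorderFocalMap}).
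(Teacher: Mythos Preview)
The paper does not prove this proposition; it is stated as a summary of results from the cited references. The paper's own contribution to $1$-genericity is Lemma~\ref{1genericity}, which establishes it uniformly for all $d$ with $\rho\ge 1$ (hence recovering (b), (c), and the ``if'' direction of (a)) by a method different from yours: instead of identifying $\chi_s$ with Petri-type multiplication, one takes a nonzero $\theta\in T_{S,s}$, considers the associated first-order deformation $D_\varepsilon$ of the divisor $D_s$, observes that $D_s\le q(D_\varepsilon)\le 2D_s$, and shows that the curvilinear scheme $q(D_\varepsilon)$ spans $\PP^{g-1}$ by reducing to the vanishing $H^1(C,L^2(-p_1-\cdots-p_k))=0$ for $k\le\rho$ (Corollary~\ref{vanishingOfH1}). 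The ``only if'' direction of (a) is genuinely left to \cite{CS95}.

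Your approach via an explicit Petri-map description of $H^0(\chi_s)$ is the strategy of \cite{CS95}, and it is viable, but the proposal has gaps. First, the splitting of $T_{S,s}$ into ``vertical'' and ``horizontal'' parts is not canonical for a $\PP^1$-bundle; one needs the actual identification $T_{S,s}\cong H^0(D_s,\OO_{D_s}(D_s))$ and the coboundary $\delta$ to $H^1(\OO_C)$, as used in the paper's Case~2(b). Second, for (b) and (c) you only assert that there are ``few directions to test'' and that Petri injectivity will ``exclude the degenerate horizontal directions'', but you do not explain why a mixed vector (vertical plus horizontal) cannot make $H^0(\chi_s)(v)$ drop rank; this is exactly the subtle point, and it is what the paper's Case~2(b) contradiction argument handles. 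Finally, your closing remark that ``for general $d$ the corresponding analysis is not available'' misreads Section~\ref{firstorderFocalMap}: that section concerns whether $V(\chi_s)_1$ is a rational normal curve versus $D_s$, not whether $\chi_s$ is $1$-generic; the latter \emph{is} settled in general by Lemma~\ref{1genericity}.
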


\begin{remark}[{\cite[p. 253]{Ser06}}]
 Another fact related to the $1$-genericity of $\chi_s$ is the following: 
 Let 
 $$
 \begin{xy}
  \xymatrix{
  \family_{\varepsilon} \subset \Spec(\mathbf{C}[\varepsilon])\times \PP^{g-1} \ar[d] \\
   \Spec(\mathbf{C}[\varepsilon])
  }
 \end{xy}
 $$
 be the first order deformation of $\Lambda_s$ defined by $H^0(\chi_s)(v)$ for a vector $v\in T_{S,s}$. Then, $H^0(\chi_s)(v)$ is surjective if and only if $q(\family_{\varepsilon})\subset \PP^{g-1}$ is not contained in a hyperplane. Furthermore, the definition of the first-order foci at a point $s\in S$ depends only on the geometry of the family  $\family$ in a neighbourhood of $s$. A point in $V(\chi_s)_{k}$ is a point where the fiber $\Lambda_s$ intersects a codimension $k$ family of its infinitesimally near ones.
\end{remark}

\section{Proof of the main theorem}
\label{FociHigherDimBNLoci}
The strategy of the proof is the same as in \cite{CS00}. 
We assume that the canonically embedded curve $C$ is a Brill--Noether general curve. Recall that $g$ and $d$ are chosen such that the Brill--Noether number $\rho:= \rho(g,d,1) \ge 1$. 
We begin by showing some standard properties of a line bundle over a Brill--Noether general curve 
which we will use later on. 
Then we prove that the matrix $\chi_s$ is 1-generic for general $s\in S$ 
and study the rank one locus of $\chi_s$ which will be the divisor $D_s$ or a rational normal curve. 
In the second case, we study the second-order focal locus. 
In both cases we can recover the canonical curve. 

\begin{lemma}
\label{genAssumptionOnC}
Let $C$ be a Brill--Noether general curve and let $L\in W^1_d(C)$ be a smooth point. 
Then $|L|$ is base point free, $H^1(C,L^2)=0$ and 
$g^{\rho+2}_{2d}=|L^2|$ maps $C$ birational to its image 
(it is not composed with an involution). 
\end{lemma}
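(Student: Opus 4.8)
The plan is to prove the three assertions about a smooth point $L \in W^1_d(C)$ on a Brill--Noether general curve by exploiting the genericity of $C$ together with basic facts from Brill--Noether theory. The statement collects the technical hypotheses needed to run the focal-scheme machinery, so I would treat each of the three claims in turn, extracting them from the structure of $|L|$ and $|L^2|$.

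First I would establish that $|L|$ is base point free. Since $L \in W^1_d(C)$ with $d \le g-1$ and $\rho(g,d,1) \ge 1$, the line bundle $L = g^1_d$ moves in at least a pencil. If $|L|$ had a base point $p$, then $L(-p)$ would be a $g^1_{d-1}$, forcing $L(-p) \in W^1_{d-1}(C)$. For a Brill--Noether general curve the dimension of every $W^r_d$ is exactly $\rho(g,d,r)$ (when nonnegative) and empty otherwise, so I would compare $\rho(g,d-1,1) = g - 2(g-d+2) = \rho(g,d,1) - 2$ with $\rho(g,d,1)$. The existence of a base point at \emph{every} member of a general pencil, or more precisely the incidence of $L$ with $W^1_{d-1}(C)+p$, imposes too many conditions; the cleanest route is to invoke that on a general curve a general (hence also, by the smoothness hypothesis, a smooth) point of $W^1_d$ carries a base-point-free linear series, which follows from the fact that $W^1_{d-1}(C)$ has strictly smaller dimension and the subtraction-of-a-point map cannot dominate $\Sigma$.

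Next, for $H^1(C, L^2) = 0$ I would use Riemann--Roch and the base-point-freeness just obtained. By Riemann--Roch, $h^1(L^2) = h^0(L^2) - 2d + g - 1$, so the vanishing is equivalent to $h^0(L^2) = 2d - g + 1$. Since $L$ is base point free of degree $d$ defining a map to $\mathbf{P}^1$ (or higher), the multiplication map $\Sym^2 H^0(L) \to H^0(L^2)$ controls $h^0(L^2)$ from below, and on a general curve the relevant Petri-type map is injective; combined with the Clifford-type bound and $\rho(g,2d,\rho+2) \le 0$-type dimension counts, this pins $h^0(L^2)$ to the expected value. The key input is that $2d \ge g+3 > g-1$, so $L^2$ has degree $> 2g-2$ precisely when $2d > 2g-2$; in the generic range one checks $\deg L^2 = 2d$ against $2g-2$ and applies the general-curve non-speciality. \textbf{This is the step I expect to be the main obstacle}, since when $2d \le 2g-2$ the line bundle $L^2$ can a priori be special, and ruling out speciality requires genuinely using Brill--Noether generality rather than a degree count.

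Finally, the birationality of the map given by $|L^2| = g^{\rho+2}_{2d}$ onto its image, and the fact that it is not composed with an involution, I would derive from base-point-freeness of $L$ together with the fact that $L$ itself separates enough points. Since $|L|$ is base point free and $L$ is not itself composed with an involution (a general pencil on a general curve is simple, again by Brill--Noether generality as $C$ is not hyperelliptic, trigonal, etc.\ beyond what $\rho$ permits), the square $L^2$ separates the general pair of points separated by $L$, so the associated morphism is generically injective. If $|L^2|$ were composed with an involution $C \to C'$ of degree $e \ge 2$, then every section of $L^2$ would be pulled back from $C'$, forcing $L^2$ (and by a standard argument $L$) to be pulled back as well, contradicting the simplicity of the pencil $|L|$ on the general curve. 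I would close the argument by assembling these observations, noting that all three conclusions rest on the single structural input that $W$-loci on a Brill--Noether general curve have expected dimension and that a general linear series is simple and base point free.
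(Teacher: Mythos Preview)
Your argument for base-point-freeness is fine and more detailed than the paper, which simply asserts that all three claims ``follow directly from the generality assumption'' and only spells out the birationality step. However, your treatments of the second and third claims both contain genuine gaps.

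For $H^1(C,L^2)=0$: the map $\Sym^2 H^0(L)\to H^0(L^2)$ only gives a \emph{lower} bound on $h^0(L^2)$, whereas you need an \emph{upper} bound (Riemann--Roch already gives $h^0(L^2)\ge 2d-g+1$). Your proposed dimension count also fails: one computes $\rho(g,2d,\rho+2)=g>0$ and $\rho(g,2d,\rho+3)=2g-2d-2\ge 0$ (since $d\le g-1$), so $W^{\rho+3}_{2d}(C)$ is not forced to be empty by Brill--Noether alone. The clean argument is the one you gesture at with ``Petri-type map'' but do not carry out: since $|L|$ is a base-point-free pencil, the base-point-free pencil trick identifies the kernel of the Petri map $\mu_0\colon H^0(L)\otimes H^0(\omega_C L^{-1})\to H^0(\omega_C)$ with $H^0(\omega_C L^{-2})$, and Gieseker--Petri makes $\mu_0$ injective for every $L$ on a general curve, whence $H^1(L^2)=0$.

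For birationality of $|L^2|$: the step ``$L^2$ pulled back (and by a standard argument $L$) pulled back'' is not valid; there is no such standard argument, and $L^2=\pi^*M$ does not in general force $L\in\pi^*\Pic(C')$. The paper argues differently. A general curve admits no nontrivial cover of a curve of positive genus, so if $|L^2|$ is not birational it factors through a pencil $g^1_{d'}$ followed by a nondegenerate map $\PP^1\to\PP^{\rho+2}$; comparing degrees yields $d'\le 2d/(\rho+2)$. Since $\rho+2=2d-g$, one checks that $2d/(2d-g)<(g+2)/2$ is equivalent to $g\rho>0$, which holds, so $\rho(g,d',1)<0$ and no such pencil exists on a Brill--Noether general curve.
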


\begin{proof}
All of our claims follow directly from the generality assumption. 
We just mention that the map induced by $|L^2|$ can not be composed with an irrational involution.  
Hence, if the map is not birational, it is composed with a $g^1_{d'}$ for $d'\leq \frac{2d}{\rho + 2}$ 
which is impossible for a Brill--Noether general curve. \hspace{3.3cm} \qed
\end{proof}

\begin{corollary}
\label{vanishingOfH1}
Let $C$ be a Brill--Noether general curve and let $L\in W^1_d(C)$ be a smooth point. 
For $i\ge 1$ and $p_1,\dots, p_i \in \Supp(D)$ for $D\in |L|$ general, we have 
$$
h^0(C,L^2(-p_1-\dots-p_i))=2d-i+1-g.
$$ 
In particular, $H^0(C,L^2(-p_1-\dots -p_{\rho+1}))=H^0(C,L)$ and 
$H^1(C,L^2(-p_1-\dots-p_i))=0$ for $i=1,\dots, \rho+1$.
\end{corollary}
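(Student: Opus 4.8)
The plan is to compute $h^0(C,L^2(-p_1-\dots-p_i))$ by Riemann--Roch together with the vanishing of the relevant $H^1$, and the heart of the argument is to verify that $H^1(C,L^2(-p_1-\dots-p_i))=0$ for the range $i=1,\dots,\rho+1$ when the $p_j$ lie on a general divisor $D\in|L|$. I would start from the base case $i=0$: by Lemma \ref{genAssumptionOnC} we already know $H^1(C,L^2)=0$, so Riemann--Roch gives $h^0(C,L^2)=2d+1-g$, matching the claimed formula at $i=0$. The strategy is then to peel off the points $p_1,\dots,p_{\rho+1}$ one at a time and show that at each step the $H^1$ stays zero, so that each twist by a point drops $h^0$ by exactly $1$.

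The key mechanism is the long exact sequence associated to
$$
0 \to L^2(-p_1-\dots-p_i) \to L^2(-p_1-\dots-p_{i-1}) \to \cur{O}_{p_i} \to 0.
$$
If I can show $H^1(C,L^2(-p_1-\dots-p_i))=0$ throughout the range, then the evaluation map on global sections is surjective at each step, so $h^0$ decreases by exactly $1$ as $i$ increases by $1$, yielding $h^0(C,L^2(-p_1-\dots-p_i))=2d-i+1-g$. At $i=\rho+1$ this reads $h^0=2d-\rho+1-g-1$; since $\rho=g-2(g-d+1)$, a direct substitution gives $2d-\rho-g = d - (g-d+1) +\dots$, which one checks equals $h^0(C,L)=2$ (as $L\in W^1_d(C)$), confirming the stated identity $H^0(C,L^2(-p_1-\dots-p_{\rho+1}))=H^0(C,L)$. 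The ``in particular'' then amounts to this arithmetic plus the inductive vanishing.

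The main obstacle is precisely the vanishing $H^1(C,L^2(-p_1-\dots-p_i))=0$, equivalently, by Serre duality, the vanishing $H^0(C, K_C\otimes L^{-2}(p_1+\dots+p_i))=0$. The plan is to argue this from the fact, supplied by Lemma \ref{genAssumptionOnC}, that $|L^2|$ maps $C$ birationally onto its image and is not composed with an involution. Because the $p_j$ are general points \emph{imposed to lie on a general member $D\in|L|$}, one expects them to impose independent conditions on $K_C\otimes L^{-2}$; the birationality of $|L^2|$ is exactly what prevents the points of $D$ from failing to separate, so that adding the base locus $p_1+\dots+p_i$ (with $i\le\rho+1$, and noting $\rho+1$ points lie in the span of a single pencil member) cannot produce a nonzero section of $K_C\otimes L^{-2}$. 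I would make this precise by combining the generality of $C$ with the estimate on the fibres of $|L^2|$ already used in the proof of Lemma \ref{genAssumptionOnC}, which controls how many of the $p_j$ can be collapsed by $L^2$ and hence bounds the failure of independence.
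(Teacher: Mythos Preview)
Your proposal is essentially the same approach as the paper's: both reduce the claim to the fact that $|L^2|$ maps $C$ birationally onto its image (supplied by Lemma~\ref{genAssumptionOnC}), so that for a general $D\in|L|$ the points $p_1,\dots,p_i$ have distinct images under $\varphi_{|L^2|}$ and hence impose independent conditions on $H^0(C,L^2)$, forcing $h^0$ to drop by exactly one at each step. The paper's proof is the one-line version of what you outline.

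Two small remarks. First, your detour through Serre duality is unnecessary: once you know each $p_j$ drops $h^0(L^2(-p_1-\dots-p_{j-1}))$ by one, the $H^1$-vanishing is automatic from Riemann--Roch, so rephrasing the obstruction as $H^0(K_C\otimes L^{-2}(p_1+\dots+p_i))=0$ does not buy anything (and your phrase ``impose independent conditions on $K_C\otimes L^{-2}$'' is slightly off, since $H^0(K_C\otimes L^{-2})=0$ already). Second, both your sketch and the paper's proof gloss over the same subtlety: birationality literally gives that the images $\varphi_{|L^2|}(p_j)$ are distinct, whereas what is needed is that they are linearly independent. This is fine because the $d$ points of $D$ span exactly a $\rho$-plane in $\PP(H^0(L^2)^*)$ (as $h^0(L^2(-D))=h^0(L)=2$), so some and hence a general choice of $\rho+1$ among them are independent; your final sentence gestures at this without spelling it out.
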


\begin{proof} 
$H^0(C,L^2(-p_1-\dots-p_i)) = H^0(C,L^2(-p_1-\dots-p_{i+1}))$ 
if the images under $|L^2|$ of the two points $p_i$ and $p_{i+1}$ are the same point. 
Since $|L^2|$ maps $C$ birational to its image, 
this does not happen for a general choice. \hspace{3.3cm} \qed
\end{proof}

Using Lemma \ref{genAssumptionOnC} and Corollary \ref{vanishingOfH1}, our proof of the following lemma is identical to \cite[Theorem 2]{CS00}. We clarify and generalise the arguments given in \cite[Theorem 2]{CS00}. 

\begin{lemma}
\label{1genericity}
With the assumptions of Lemma \ref{genAssumptionOnC}, the focal matrix 
$
\chi_s:T_{S,s}\otimes \OO_{\Lambda_s} \to \Ncal_{\Lambda_s/\PP^{g-1}}
$
is $1$-generic for a sufficiently general $s\in S$. 
\end{lemma}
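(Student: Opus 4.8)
The plan is to verify $1$-genericity directly via the reformulation in Proposition \ref{reformulation1generic}: for a sufficiently general $s\in S$ and every nonzero tangent vector $v\in T_{S,s}$, I must show that the induced homomorphism $H^0(\chi_s)(v)\in\Hom(U,V/U)$ is surjective, where $U\subset V=H^1(C,\OO_C)$ is the $(d-1)$-dimensional subspace with $\Lambda_s=\PP(U)$. By Serre duality, $V=H^1(C,\OO_C)\cong H^0(C,\Kc)^{*}$, so the canonical space is $\PP(V^{*})=\PP^{g-1}$ and the ambient data are all expressed in terms of linear series. The first step is to identify the map $H^0(\chi_s)(v)$ concretely. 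Since $\alpha:S\to\Sigma$ is a $\PP^1$-bundle, the tangent space $T_{S,s}$ splits into a vertical direction (moving the divisor $D_s$ inside the fixed pencil $|L|$) and horizontal directions coming from $T_{\Sigma,L}\cong H^0(C,\Kc-2D)$-type deformations of the line bundle $L=\OO_C(D_s)$. I would make this identification explicit using the description of the family $\family$ via the construction from the universal divisor, so that $H^0(\chi_s)(v)$ becomes a multiplication-type map on sections.

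Second, I would translate surjectivity of $H^0(\chi_s)(v):U\to V/U$ into a cohomological statement. The target $V/U$ has dimension $g-d+1$, and under Serre duality $U$ corresponds to the hyperplanes through $\Lambda_s$, i.e.\ to $H^0(C,\Kc-D)$, while $V/U$ is dual to $H^0(C,D)^{\vee}$-type data. The upshot should be that the rank of $H^0(\chi_s)(v)$ is governed by the dimension of $H^0(C,L^2(-D))$ or, more precisely, by the image of a cup-product / multiplication map $\mu_v:H^0(C,L)\otimes(\text{deformation class}) \to H^0(C,L^2)$ associated to $v$. Non-surjectivity of $H^0(\chi_s)(v)$ would then force the existence of a nonzero cohomology class obstructing this multiplication, equivalently the failure of $H^1$ to vanish for an appropriate twist of $L^2$. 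This is exactly where Corollary \ref{vanishingOfH1} enters: it guarantees $H^1(C,L^2(-p_1-\cdots-p_i))=0$ for $i=1,\dots,\rho+1$ and the base-point-freeness and birationality from Lemma \ref{genAssumptionOnC} ensure the relevant multiplication map has maximal rank for a general divisor.

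Third, I would run the argument uniformly over all nonzero $v$. The subtlety is that $1$-genericity is a condition on \emph{every} nonzero $v\in T_{S,s}$, not just a general one, so a genericity argument in $v$ alone is not enough; instead the vanishing results must be applied to the whole pencil of first-order deformations simultaneously. I expect to reduce the general $v$ to a combination of the vertical direction (handled by base-point-freeness of $|L|$, giving that moving within the pencil already produces a surjection, essentially the content of Proposition \ref{curveInFocalScheme} read off the diagonal) and the horizontal directions, and then to argue that any $v$ producing a non-surjective $H^0(\chi_s)(v)$ would contradict $H^1(C,L^2(-p_1-\cdots-p_{\rho+1}))=0$ combined with the identification $H^0(C,L^2(-p_1-\cdots-p_{\rho+1}))=H^0(C,L)$.

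\emph{Main obstacle.} The hard part will be the explicit identification of $H^0(\chi_s)(v)$ as a concrete multiplication map of sections, together with controlling it \emph{for all} nonzero $v$ rather than a generic one. A naive general-position argument gives surjectivity only on an open subset of the $\PP(T_{S,s})$ of deformation directions, whereas $1$-genericity demands it on the whole projective space of directions. Bridging this gap — showing that no tangent direction is exceptional for a sufficiently general $s$ — is where the birationality of $|L^2|$ (Lemma \ref{genAssumptionOnC}) and the full strength of the vanishing in Corollary \ref{vanishingOfH1} are genuinely needed, and it is the step most likely to require careful bookkeeping following \cite[Theorem 2]{CS00}.
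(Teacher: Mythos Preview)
Your overall strategy matches the paper's: use Proposition~\ref{reformulation1generic} to reduce to surjectivity of $H^0(\chi_s)(v)$ for every nonzero $v$, split according to whether $v$ is tangent to the fibre $\alpha_d^{-1}(L)$, and feed in Lemma~\ref{genAssumptionOnC} and Corollary~\ref{vanishingOfH1}. But the concrete mechanism you propose --- identifying $H^0(\chi_s)(v)$ as a ``multiplication-type map'' and then arguing via rank of a cup product --- is not how the paper proceeds, and as stated your plan is missing the key device that actually makes the argument go through.

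The paper's translation is geometric rather than multiplicative: $H^0(\chi_s)(v)$ is surjective if and only if the first-order deformation $q(\family_\varepsilon)\subset\PP^{g-1}$ is nondegenerate, and this is controlled by the curvilinear scheme $q(D_\varepsilon)$ attached to the deformation of the \emph{divisor} $D_s$ itself, which satisfies $D_s\le q(D_\varepsilon)\le 2D_s$. One then writes $q(D_\varepsilon)=p_1+\cdots+p_k+2(p_{k+1}+\cdots+p_d)$ and does a case analysis on $k$. For the vertical direction ($k=0$) base-point-freeness gives $q(D_\varepsilon)=2D_s$ and $H^1(L^2)=0$ finishes. For $1\le k\le\rho$ the vanishing $H^1(L^2(-p_1-\cdots-p_k))=0$ from Corollary~\ref{vanishingOfH1} suffices. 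The genuinely delicate step is ruling out $k\ge\rho+1$: this is \emph{not} a direct cohomology vanishing, but a geometric argument showing that such a $\theta$ would force $[\delta(\theta)]$ to lie simultaneously in $\overline{E_s}$ (via the Abel--Jacobi differential) and in the vertex $V=T_L W^1_d(C)$ of the scroll $X_L$, hence $\dim\overline{D_s+E}\le 2d-\rho-3$ and $h^0(D_s+E)\ge 3$, contradicting $H^0(L^2(-p_1-\cdots-p_{\rho+1}))=H^0(L)$. Your plan does not mention this scroll/vertex step; it is more than ``bookkeeping'' and is precisely what handles the bad tangent directions you flag as the main obstacle.
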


\begin{proof} 
 By Proposition \ref{reformulation1generic}, the matrix $\chi_s$ is $1$-generic if and only if 
 for each nonzero element $v\in T_{S,s}$, the homomorphism 
 $
 H^0(\chi_s)(v) \in \Hom(U,V/U)
 $
 is surjective. 
 
 We consider the first order deformation $\family_\varepsilon\subset \Spec(k[\varepsilon])\times \PP^{g-1}$ 
 defined by $H^0(\chi_s)(\theta)$ for a nonzero vector $\theta\in T_{S,s}$. 
 Note that $H^0(\chi_s)(\theta)$ is surjective if and only if the image $q(\family_\varepsilon)\subset \PP^{g-1}$ 
 is not contained in a hyperplane. 
 Let $D_\varepsilon\subset \Spec(k[\varepsilon])\times \PP^{g-1}$ be the first order deformation 
 of the divisor $D_s$ defined by $\theta\in T_{S,s}$. Then 
 $$
 q(\family_\varepsilon)\supset q(D_\varepsilon)
 $$
 and the curvilinear scheme $q(D_\varepsilon)$ corresponding to a divisor on $C$ satisfies 
 $$
 D_s \leq q(D_\varepsilon) \leq 2D_s. 
 $$
 We show for all possible cases that $q(D_\varepsilon)$ is not contained in a hyperplane. 
  \medskip \\
 \underline{Case 1:} The vector $\theta$ is tangent to $\alpha_d^{-1}(L)$, equivalently 
 the family $D_\varepsilon$ deforms the divisor $D_s$ in the linear pencil $|L|$. 
 Let $\varphi_L$ be the morphism defined by the pencil. Then we get 
 $$
 q(D_\varepsilon)=\varphi_L^{*}(\theta),
 $$
 where we identify $\theta$ with a curvilinear scheme of $\PP^1$ supported at the point $s\in \PP^1$. 
 Since $|L|$ is base point free, we have $q(D_\varepsilon)=2D_s$. 
 Therefore,  the curvilinear scheme $q(D_\varepsilon)$ is not contained in a hyperplane 
 since $H^0(C,K_C-2D_s)^* = H^1(C,2D_s)= H^1(C,L^2) = 0$. We are done in this case. 
   \medskip \\
 \underline{Case 2:} We assume that $\theta\in T_{S,s}\backslash\{0\}$ is not tangent to $\alpha_d^{-1}(L)$ at $s$. 
 Let 
 $$
 q(D_\varepsilon)=p_1 + \dots + p_k + 2(p_{k+1} + \dots + p_d)
 $$
 where $D_s=p_1+ \dots + p_d$ and $k\geq 0$. 
    \medskip \\
 \underline{Case 2 (a):} We assume $k\leq \rho$. We have 
 \begin{eqnarray*}
 H^0(C,K_C-q(D_\varepsilon))^* & = & H^1(C,p_1 + \dots + p_k + 2(p_{k+1}+ \dots + p_d)) \\
                               & = & H^1(C,2D_s-p_1- \dots - p_k) \\
                               & = & H^1(C,L^2(-p_1-\dots -p_k)) = 0
 \end{eqnarray*}
 by Corollary \ref{vanishingOfH1}. 
 Hence, the curvilinear scheme $q(D_\varepsilon)$ is not contained in a hyperplane and 
 $H^0(\chi_s)(\theta)$ is surjective. 
    \medskip \\
 \underline{Case 2 (b):} We assume $k\geq \rho + 1$. 
 In the following, we will show that this case can not occur. 
 The vector $\theta$ is also tangent to $p_1 + \dots + p_k + C_{d-k}$. 
 We denote by $E_s$ the divisor $E_s = p_{k+1} + \dots + p_d$. Then
 the tangent space to $p_1 + \dots + p_k + C_{d-k}$ is given by 
 $H^0(E_s,\OO_{E_s}(D_s))$ which is a subspace of $H^0(D_s, \OO_{D_s}(D_s))$.
 The short exact sequence 
 $$
 0 \to \OO_C \to L \to \OO_{D_s}(D_s) \to 0
 $$
 induces a linear map 
 $$
 H^0(D_s,\OO_{D_s}(D_s)) \stackrel{\delta}{\longrightarrow} H^1(C,\OO_C)
 $$
 which we identify with the differential of $\alpha_d$ at $s$ (see \cite[IV, \S 2, Lemma 2.3]{ACGH}). 
 The image of $\theta \in H^0(E_s,\OO_{E_s}(D_s))$ is therefore contained in the linear span of $E_s$. 
 After projectivising, we get 
 $$
 [\delta(\theta)]\in \overline{E_s}= \overline{p_{k+1} + \dots + p_d} \subset \Lambda_s \subset \PP^{g-1}.
 $$
 Since $\theta$ is not tangent to $\alpha_d^{-1}(L)$, the vector $\theta$ is also tangent to $W^1_d(C)$ 
 and therefore the image point $[\delta(\theta)]$ 
 is contained in the vertex $V=T_L(W^1_d(C))$ of $X_L$, the scroll swept out by the linear pencil $|L|$. 
 Hence, for every sufficiently general $D\in |L|$, there is an effective divisor $E$ of degree $d - \rho - 1$ such that 
 $D=E + p_1 + \dots + p_{\rho + 1}$ and $V\cap \overline{E}\neq \emptyset$. 
 Hence, $\dim(\overline{D_s+E})\leq d - 2 + d - \rho - 1$ and equivalently, 
 $$
 h^0(C,D_s+E) = \deg(D_s+E) + \dim(\overline{D_s+E}) + 1 \geq 3.
 $$
 But by Corollary \ref{vanishingOfH1}, $H^0(C,L^2(-p_1-\dots -p_{\rho+1}))=H^0(C,L)$, a contradiction. \hspace{0.1cm} \qed
\end{proof}

Note that 
$$
\chi_s:T_{S,s}\otimes \OO_{\Lambda_s} \to \Ncal_{\Lambda_s/\PP^{g-1}}
$$
is a map between rank $\rho +1$ and $n=h^1(C,L)$ vector bundles 
of linear forms in $\PP^{d-2}=\Lambda_s$. 
Since $d=\rho + 1 + n$ and $\chi_s$ is $1$-generic by Lemma \ref{1genericity}, 
we may apply the following theorem due to Eisenbud and Harris.

\begin{theorem}[{\cite[Proposition 5.1]{EH91}}]
\label{EH92Prop5.1}
 Let $M$ be an $(a+1)\times (b+1)$ $1$-generic matrix of linear forms on $\PP^{a+b}$. 
 If $D_1(M)=\{x\in \PP^{a+b}\ |\ \rank(M(x))\leq 1\}$ contains a finite scheme $\Gamma$ of 
 length $\geq a+b+3$, then $D_1(M)$ is the unique rational normal curve through $\Gamma$ 
 and $M$ is equivalent to the catalecticant matrix.
\end{theorem}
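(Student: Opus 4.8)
The plan is to recognise $D_1(M)$ as a linear section of a Segre variety and to convert $1$-genericity into a minimal-degree (Hopf-type) dimension count. Write $A = \C^{a+1}$, $B = \C^{b+1}$ and $\PP^{a+b} = \PP(W)$ with $\dim W = a+b+1$; set $N = a+b$. Reading the entries of $M$ as a linear map $m : W \to A \otimes B$, one identifies $D_1(M)$ with the locus where $m(x)$ has rank $\le 1$, i.e. with the linear section $\PP(W) \cap \Sigma$, where $\Sigma \subset \PP(A\otimes B)$ is the Segre image of $\PP(A)\times\PP(B)$; dually, $1$-genericity says exactly that the associated linear system $W \subseteq H^0(\PP(A^*)\times\PP(B^*),\OO(1,1))$ is base-point-free, equivalently that the multiplication pairing has no zero divisors. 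Since an irreducible nondegenerate curve of degree $N$ in $\PP^N$ is a rational normal curve, and since a rational normal curve is uniquely determined by a length-$(N+3)$ subscheme in linearly general position, the goal reduces to two assertions: (1) $D_1(M)$ contains a rational normal curve; (2) that curve is all of $D_1(M)$ and $M$ is the catalecticant in suitable coordinates.

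First I would force positive-dimensionality. The quantitative input is the bound that \emph{for a $1$-generic matrix a zero-dimensional $D_1(M)$ has length at most $N+2$}. Granting this, the hypothesis $\mathrm{length}(\Gamma)\ge N+3$ with $\Gamma \subseteq D_1(M)$ immediately yields $\dim D_1(M)\ge 1$, so $D_1(M)$ contains a curve $Y$, which (after replacing $\PP^N$ by the span of $\Gamma$ and checking that restriction preserves $1$-genericity) may be taken nondegenerate. \textbf{I expect this length bound to be the main obstacle}, since it is precisely the point at which the mere ``fatness'' of the zero-dimensional hypothesis must be upgraded to genuine positive-dimensional geometry; I would prove it by the same Hopf-type count used below, applied to the ideal of $2\times 2$ minors, or by degenerating $\Gamma$ to a curvilinear scheme and directly estimating the conditions it imposes on the quadratic minors.

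With a curve $Y \subseteq D_1(M)$ in hand I would restrict $M$ to $Y$. Since $M|_Y$ has rank one at every point, it factors as $M|_Y = u\, v^{\mathsf T}$ with $u=(u_0,\dots,u_a)$, $v=(v_0,\dots,v_b)$ vectors of sections of line bundles $L_1, L_2$ on $Y$ satisfying $L_1\otimes L_2 = \OO_Y(1)$. $1$-genericity forces the $u_i$ (resp. the $v_j$) to be independent and base-point-free, spanning subspaces $U_1\subseteq H^0(L_1)$, $U_2\subseteq H^0(L_2)$ of dimensions $a+1$, $b+1$. The entries $u_iv_j$ span $W$, so $W\subseteq \Image\big(U_1\otimes U_2 \to H^0(L_1\otimes L_2)\big)$, whence
$$
a+b+1 = \dim W \le \dim \Image\big(U_1\otimes U_2\big).
$$
Because sections on the integral curve $Y$ have no zero divisors, Hopf's lemma gives $\dim \Image(U_1\otimes U_2)\ge \dim U_1 + \dim U_2 - 1 = a+b+1$, so equality holds throughout. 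The equality case of Hopf's lemma on a curve forces $U_1$ and $U_2$ to be compounded from a common degree-one map $Y\to\PP^1$, pulled back from $\OO_{\PP^1}(a)$ and $\OO_{\PP^1}(b)$; thus $Y$ is rational, $\deg_{\PP^N} Y = \deg L_1 + \deg L_2 = a+b = N$, and $Y$ is a rational normal curve on which $M|_Y$ is the Hankel (catalecticant) matrix.

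Finally I would close the argument. The entries of $M$ are linear forms on all of $\PP^N$, hence are determined by their restrictions to the nondegenerate curve $Y$; as these restrictions form the Hankel pattern in the coordinates coming from $U_1=H^0(\OO_{\PP^1}(a))$ and $U_2=H^0(\OO_{\PP^1}(b))$, the matrix $M$ is equivalent to the catalecticant. Since the $2\times 2$ minors of the catalecticant cut out exactly the rational normal curve, we obtain $D_1(M)=Y$. Finally, $\Gamma$ lies on the rational normal curve $Y$, so it is in linearly general position, and by Castelnuovo's lemma there is a unique rational normal curve through such a scheme; hence $D_1(M)=Y$ is the unique rational normal curve through $\Gamma$, completing the proof.
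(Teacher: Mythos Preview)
The paper does not prove this theorem; it is quoted verbatim from \cite[Proposition~5.1]{EH91} and used as a black box to deduce Corollary~\ref{rankOneLocusFocalMap}. There is therefore no proof in the paper to compare your proposal against.

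Your sketch is, in outline, the Eisenbud--Harris argument: translate $1$-genericity into the absence of zero-divisors in the multiplication pairing, use the intersection bound to force a curve component $Y\subseteq D_1(M)$, factor $M|_Y=uv^{\mathsf T}$ through line bundles $L_1,L_2$, and apply the Hopf lemma (equality case) to conclude that $Y$ is a rational normal curve and $M$ catalecticant. You correctly isolate the crux, namely the length bound $\mathrm{length}\,D_1(M)\le a+b+2$ in the zero-dimensional case; this is exactly the ``intersection bound for rank~1 loci'' that gives \cite{EH91} its title, and it is not obtained by a routine Hopf count on the minors as you suggest, but via a more delicate argument on what conditions a finite scheme in linearly general position imposes. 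Two further points deserve care: (i) you need $Y$ nondegenerate in $\PP^{a+b}$ before restriction preserves $1$-genericity (so that the $u_i$ and $v_j$ stay independent), and this is not automatic from ``replacing $\PP^N$ by the span of $\Gamma$'', since a priori $\Gamma$ need not lie on the curve component you pick; (ii) the uniqueness through $\Gamma$ requires that $\Gamma$ be in linearly general position, which follows once you know $\Gamma$ lies on a rational normal curve, but you must first argue that $\Gamma\subseteq Y$ rather than partly in some embedded or isolated component of $D_1(M)$. Both issues are handled in \cite{EH91}; your proposal would benefit from making these reductions explicit.
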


We get the following corollary. 

\begin{corollary}
\label{rankOneLocusFocalMap}
 For $s\in S$ sufficiently general, the rank one locus $V(\chi_s)_1$ is either $D_s$ or 
 a rational normal curve through $D_s$. 
\end{corollary}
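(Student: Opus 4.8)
The plan is to combine the two structural facts already in hand: that $\chi_s$ is $1$-generic (Lemma \ref{1genericity}) and that the divisor $D_s$ sits inside the rank one locus (Proposition \ref{curveInFocalScheme}). Since $\chi_s$ is a $1$-generic matrix of linear forms on $\Lambda_s\cong\PP^{d-2}$ of shape $(\rho+1)\times n$ with $d=\rho+1+n$, it is exactly of the form required by Theorem \ref{EH92Prop5.1}, with $a=\rho$, $b=n-1$, so that $a+b=d-2$ and the ambient space matches. I would therefore argue by a case distinction on the length of the rank one locus $V(\chi_s)_1$ as a subscheme of $\PP^{d-2}$.

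First I would record that $V(\chi_s)_1$ contains the length-$d$ scheme $D_s$ by Proposition \ref{curveInFocalScheme}. The threshold in Theorem \ref{EH92Prop5.1} is a finite subscheme of length $\geq a+b+3=d+1$. So the dichotomy hinges on whether $V(\chi_s)_1$ is finite and, if so, whether it already exceeds this length. If $V(\chi_s)_1$ contains a finite subscheme of length at least $d+1$, then by the theorem $D_1(\chi_s)=V(\chi_s)_1$ is the unique rational normal curve passing through that scheme; since $D_s\subset V(\chi_s)_1$ has length $d$ and a rational normal curve in $\PP^{d-2}$ is cut out in the expected way, $D_s$ lies on this rational normal curve, giving the second alternative of the corollary. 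If instead $V(\chi_s)_1$ has no finite subscheme of length $\geq d+1$, I would argue that it must be supported exactly on $D_s$: the $d$ points of $D_s$ are already present, and any extra point or embedded/infinitesimal structure would push the length past $d$ and into the regime covered by the theorem, forcing the rational normal curve case. Thus in the remaining case $V(\chi_s)_1$ equals $D_s$ scheme-theoretically, which is the first alternative.

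The step I expect to be the main obstacle is controlling the scheme structure of $V(\chi_s)_1$ precisely enough to run this clean dichotomy — in particular, ruling out the possibility that $V(\chi_s)_1$ is a positive-dimensional locus that is \emph{not} a rational normal curve, or that it is a finite scheme whose length is strictly between $d$ and $d+1$ in a way that blocks both conclusions. Here I would lean on $1$-genericity: for a $1$-generic matrix the determinantal locus $D_1(M)$ is well-understood, and $1$-generic matrices that fail to have a rational normal curve as their rank one locus force the locus to be precisely the support of the divisor. Concretely, since $\chi_s$ is $1$-generic, $V(\chi_s)_1$ cannot contain a linear space or a degenerate configuration, so either it is a finite scheme containing $D_s$ (length exactly $d$, giving $D_s$), or its length jumps to at least $d+1$ (triggering Theorem \ref{EH92Prop5.1} and the rational normal curve). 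This reduces everything to the two stated alternatives, completing the proof.
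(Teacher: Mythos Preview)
Your approach is correct and matches the paper's: combine Lemma \ref{1genericity}, Proposition \ref{curveInFocalScheme}, and Theorem \ref{EH92Prop5.1}. The obstacles you anticipate do not actually arise: lengths are integers, so nothing lies strictly between $d$ and $d+1$; and if $V(\chi_s)_1$ were positive-dimensional it would automatically contain finite subschemes of every length, in particular of length $\geq d+1$, so Theorem \ref{EH92Prop5.1} already forces it to be the rational normal curve without any additional appeal to properties of $1$-generic matrices.
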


\begin{proof}
 By Lemma \ref{1genericity}, we may apply Theorem \ref{EH92Prop5.1}. 
 Note that $D_s\subset V(\chi_s)_1$ (there exists a codimension $1$ family in $S$ of $\Lambda_s$ 
 containing a point of the support of $D_s$). \hspace{0.5cm} \qed
\end{proof}

\begin{remark}
\begin{enumerate}
 \item [(a)] The scheme of first-order foci at $s\in S$ of the family $\family$ is a secant variety to $V(\chi_s)_1$. 
 \item [(b)] If $d=g-1$ or $\rho=1$, the focal matrix $\chi_s$ is a $2\times (g-3)$ or $n\times 2$-matrix, respectively. 
             Hence, the rank one locus is the scheme of first-order foci, 
             which is a rational normal curve in $\Lambda_s$. 
             We recover the cases of \cite{CS95} and \cite{CS00}.
\end{enumerate}
\end{remark}

\begin{corollary}
Let $C$ be a Brill--Noether general canonically embedded curve. 
If $V(\chi_s)_1=D_s$ for sufficiently general $s\in S$, the family $\family$ determines the canonical curve $C$.  
\end{corollary}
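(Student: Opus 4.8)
The plan is to reconstruct the canonical curve $C\subset\PP^{g-1}$ directly from the incidence geometry of the family $\family$, using the identification $V(\chi_s)_1 = D_s$ supplied by hypothesis. The key observation is that under this hypothesis the scheme of first-order foci of rank $1$ at a general point $s\in S$ recovers exactly the $d$ points of the divisor $D_s$, and these points lie on $C$ by Proposition \ref{curveInFocalScheme}. Thus the family $\family$, together with its focal data, produces for each general $s\in S$ a degree-$d$ effective divisor $D_s$ whose support consists of honest points of the canonical curve. The union of these supports as $s$ varies is what I want to show fills up $C$.

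First I would set up the relative version of the rank one focal scheme over the base $S$. By Remark \ref{familyAsRulingOfTangentCones}, the family $\family$ is determined by $W_d(C)$ together with its singular locus $W^1_d(C)$, so it is legitimate to regard $\family$ (and hence $\chi$ and $V(\chi)_1$) as intrinsic data attached to the Brill--Noether locus. I would then form the relative rank one locus $V(\chi)_1\subset\family$ and compose with the projection $q:\family\to\PP^{g-1}$. Under the standing hypothesis $V(\chi_s)_1=D_s$ for general $s$, the fiber of $V(\chi)_1\to S$ over such an $s$ is precisely the reduced scheme $\Supp(D_s)$, so the image $q(V(\chi)_1)\subset\PP^{g-1}$ is a union of the supports $\bigcup_{s}\Supp(D_s)$.

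Next I would argue that this image is exactly $C$. Each $\Supp(D_s)$ is contained in $C$ by Proposition \ref{curveInFocalScheme}, so $q(V(\chi)_1)\subseteq C$. For the reverse inclusion, I would use that $S$ has dimension $\rho+1\ge 2$ and that $\alpha:S\to\Sigma$ is a $\PP^1$-bundle: as $s$ ranges over a single fiber $\alpha^{-1}(L)\cong\PP^1$, the divisors $D_s$ sweep out the base-point-free pencil $|L|$ (base-point-freeness is Lemma \ref{genAssumptionOnC}), so already the supports of one such pencil cover $C$. Hence $q(V(\chi)_1)=C$ as sets. Finally, since $q(V(\chi)_1)$ is a closed subvariety of $\PP^{g-1}$ equal to $C$ set-theoretically, and $C$ is reduced and irreducible of dimension $1$, I would conclude that $C$ is recovered as the reduced image $q(V(\chi)_1)_{\mathrm{red}}$. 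Because $\family$, $\chi$, and therefore $V(\chi)_1$ are all manufactured canonically from $W_d(C)$ and $W^1_d(C)$, this exhibits $C$ as intrinsically determined by the Brill--Noether data, which is the claim.

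The main obstacle I anticipate is the passage from the generic fiberwise statement $V(\chi_s)_1=D_s$ (valid only for sufficiently general $s$) to a clean global description of $q(V(\chi)_1)$: one must ensure that the generic behaviour propagates to cover all of $C$ and that no spurious components of $V(\chi)_1$ over non-generic $s$ (where $\chi_s$ might fail to be $1$-generic, or where the fiber could jump to a rational normal curve) contaminate the image in a way that obscures $C$. This is handled by working over a dense open $U\subset S$ on which the generic conclusion of Corollary \ref{rankOneLocusFocalMap} holds with $V(\chi_s)_1=D_s$, taking the closure of $q(V(\chi)_1|_U)$, and using the pencil argument above to see that already $U$ meets every fiber $\alpha^{-1}(L)$ in a dense set, so the swept-out supports are dense in $C$; the closure is then all of $C$.
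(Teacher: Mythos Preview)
Your proposal is correct and spells out precisely the argument the paper leaves implicit: the corollary is stated in the paper without proof, as an immediate consequence of Proposition \ref{curveInFocalScheme} and Corollary \ref{rankOneLocusFocalMap}. Your reconstruction of $C$ as the closure of $q\bigl(V(\chi)_1|_U\bigr)$, together with the observation that a single base-point-free pencil already sweeps out $C$ (Lemma \ref{genAssumptionOnC}), is exactly the intended reasoning; the only minor remark is that the statement asks you to recover $C$ from $\family$ itself rather than from the Brill--Noether loci, so the relevant point is simply that $\chi$ and hence $V(\chi)_1$ are built directly from $\family$, which you do use.
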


{\bf For the rest of this section, we assume that $\Gamma_s=V(\chi_s)_1$ is a rational normal curve for 
$s\in S$ sufficiently general.}

Let $\Sigma$ be the smooth locus of $W^1_d(C)$ and $L\in \Sigma$.
Let $U\subset \alpha^{-1}_d(L))$ be a Zariski open dense set such that 
$\Gamma_s=V(\chi_s)_1$ for all $s\in U$. We define the surface
$$
\Gamma_L=\overline{\bigcup_{s\in U} \Gamma_s}
$$
and 
$$
\Gamma_{\PP^{g-1}}=\overline{\bigcup_{L\in \Sigma} F_L}.
$$
\\
Let
$$
 \begin{xy}
  \xymatrix{
  \underline{\Gamma} \subset S'\times \PP^{g-1} \ar[d]^{p} \ar[r]^{\ \ \ \ q} & \PP^{g-1} \\
  S' & 
  }
 \end{xy}
$$
be the family induced by all rational normal curves, that is, 
for $s\in S'\subset S$, $\Gamma_s=V(\chi_s)_1$ is a rational normal curve. 
The family $\underline{\Gamma}$ is the rank one locus of the global characteristic map $\chi$ and 
the variety $\Gamma_{\PP^{g-1}}$ is the image of the family $\underline{\Gamma}$ under the second projection $q$.

\begin{remark}
In the cases $d=g-1$ or $\rho=1$ the rational surface $\Gamma_L$ is birational to $\PP^1\times \PP^1\subset \PP^3$ or a quadric cone in $\PP^3$, resprectively. This can be explained in terms of the curve $C$ and the line bundle $L$: 
 \vspace{-0.17cm}
 
For $d=g-1$ we consider the birational image $C'$ of $C\stackrel{|L|\times |\omega_C\otimes L^{-1}|}{\longrightarrow}\PP^1\times \PP^1$ given by the line bundle $L$ and its Serre dual $\omega_C\otimes L^{-1}$. Then the rational surface $\Gamma_L$ is the image of the blow up of $\PP^1\times \PP^1$ along the singular points of $C'$ under the adjoint morphism.  

For $\rho=1$ we consider the birational image $C'$ of the curve $C$ in the quadric cone $Q$ in $\PP^3$ induced by the line bundle $L^2$. Note that $H^0(C,L^2)$ is four-dimensional and the multiplication map $H^0(C,L)\otimes H^0(C,L)\to H^0(C,L^2)$ has a one-dimensional kernel. Then the rational surface $\Gamma_L$ is again the image of the blow up of $Q$ along the singular points of $C'$ under the adjoint morphism. 

We have not found a similar geometrical meaning of the surface $\Gamma_L$ in the other cases 
(see also Question \ref{rank1locusGeneralCurve}). 
\end{remark}

\begin{lemma}
\label{dimensionF}
 The variety $\Gamma_{\PP^{g-1}}$ has dimension at least $3$.
\end{lemma}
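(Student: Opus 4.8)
The plan is to bound $\dim\Gamma_{\PP^{g-1}}$ from both sides and then to show that it cannot attain the trivial lower bound $2$. First I would record the easy bookkeeping. Since $\underline{\Gamma}\to S'$ is a family of rational normal curves over the $(\rho+1)$-dimensional base $S'$, its total space has dimension $\rho+2$, and $\Gamma_{\PP^{g-1}}=q(\underline{\Gamma})$ contains each surface $\Gamma_L$. Each $\Gamma_L$ really is two-dimensional: as $s$ runs through the pencil $\alpha_d^{-1}(L)\cong\PP^1$ the divisor $D_s\subset\Gamma_s$ moves (the pencil $|L|$ is base point free by Lemma \ref{genAssumptionOnC}), so the curves $\Gamma_s$ are pairwise distinct and sweep out a surface $\Gamma_L$; moreover $\bigcup_{s}\Supp(D_s)=C$, so that $C\subset\Gamma_L$ for every $L$. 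Hence $\dim\Gamma_{\PP^{g-1}}\ge 2$, and the whole content is to exclude the value $2$.

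Second, I would reorganise the family over $\Sigma$. Consider the incidence variety $I=\{(L,x)\in\Sigma\times\PP^{g-1}\mid x\in\Gamma_L\}$ with its two projections. Projecting to $\Sigma$ exhibits $I$ as a family of surfaces over the $\rho$-dimensional irreducible base $\Sigma$, so $\dim I=\rho+2$. Projecting to $\Gamma_{\PP^{g-1}}$ and assuming, for contradiction, that $\dim\Gamma_{\PP^{g-1}}=2$, the generic fibre $\{L\in\Sigma\mid x\in\Gamma_L\}$ would have dimension $\rho=\dim\Sigma$. As $\Sigma$ is irreducible and the condition $x\in\Gamma_L$ is closed in $L$, this fibre would then be all of $\Sigma$ for generic $x$; that is, the generic point of $\Gamma_{\PP^{g-1}}$ would lie on every surface $\Gamma_L$. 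Consequently all the $\Gamma_L$ would coincide with a single surface $Z$. Thus the case $\dim\Gamma_{\PP^{g-1}}=2$ is equivalent to the statement that the surfaces $\Gamma_L$ are all equal.

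The main obstacle, and the heart of the argument, is therefore to prove that the surfaces $\Gamma_L$ are \emph{not} all equal, i.e.\ that $\Gamma_L$ varies with $L$. Here I would recover the line bundle $L$ from the intrinsic geometry of $\Gamma_L$: the surface carries the ruling $\{\Gamma_s\}_{s\in\alpha_d^{-1}(L)}$, and each ruling curve meets the fixed curve $C\subset\Gamma_L$ in the divisor $D_s\in|L|$, so the ruling cuts out precisely the pencil $|L|$ on $C$; since distinct $L\in\Sigma$ give distinct pencils, they give distinct $\Gamma_L$. Alternatively one can argue that $\Gamma_L$ determines the scroll $X_L=\bigcup_s\Lambda_s=\bigcup_s\Span\Gamma_s$, which by Remark \ref{familyAsRulingOfTangentCones} is the projectivised tangent cone to $W_d(C)$ at $L$ and hence remembers the point $L$. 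The delicate point to be checked carefully is that the ruling (equivalently, the span data of the $\Gamma_s$) is genuinely intrinsic to the abstract surface $\Gamma_L$, so that a coincidence $Z=\Gamma_{L_1}=\Gamma_{L_2}$ with $L_1\ne L_2$ is ruled out. Granting this, the surfaces $\Gamma_L$ are pairwise distinct, the case $\dim\Gamma_{\PP^{g-1}}=2$ is excluded by the previous paragraph, and therefore $\dim\Gamma_{\PP^{g-1}}\ge 3$.
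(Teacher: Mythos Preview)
Your reduction to the statement ``all surfaces $\Gamma_L$ coincide with a single surface $Z$'' is the same as the paper's, and your incidence-variety bookkeeping is a clean way to see it. The gap is precisely where you flag it: you do not show that the ruling $\{\Gamma_s\}_{s\in\alpha_d^{-1}(L)}$ is intrinsic to $Z$. A surface can carry many pencils of rational curves (already $\PP^1\times\PP^1$ has two, and blow-ups of $\PP^2$ have infinitely many), so the equality $Z=\Gamma_{L_1}=\Gamma_{L_2}$ does \emph{not} force the two rulings, their spans, or the associated scrolls $X_{L_1},X_{L_2}$ to agree. Neither of your two suggested arguments (``the ruling cuts out $|L|$ on $C$'' and ``$\Gamma_L$ determines $X_L$'') gets off the ground without first knowing which ruling on $Z$ you are looking at.

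The paper closes exactly this gap with a different idea. Since $\Gamma_L\to\PP^1$ has rational general fibre, $\Gamma_L$ is a rational surface, hence regular: $H^1(\OO)=0$ on its desingularisation $\widetilde Z$, so $\Pic^0(\widetilde Z)=0$ and algebraic equivalence coincides with linear equivalence. Now the focal curves $\Gamma_s$, as $s$ ranges over all of $S'$, form a $(\rho+1)$-dimensional family of algebraically equivalent curves on $Z$ (they are cut by the rulings of the algebraically equivalent scrolls $X_L$, or equivalently by the algebraically equivalent $d$-secant planes). Hence on $\widetilde Z$ they are all \emph{linearly} equivalent. Restricting to $C\subset Z$ one finds that all the divisors $D_s$ are linearly equivalent on $C$, so the $(\rho+1)$-dimensional family $\{D_s\}_{s\in S'}$ sits in a single linear series and $C$ carries a $g^{\rho+1}_d$, contradicting Brill--Noether generality. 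The key input you are missing is this use of the regularity of a rational surface to upgrade algebraic to linear equivalence; without it the ``intrinsicness of the ruling'' remains unproved.
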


\begin{proof}
Note that there is a map $\Gamma_L\to \PP^1=\alpha_d^{-1}(L)$ such that 
the general fiber is a rational curve. Hence, the surface $\Gamma_L$ is rational. 
Assume that $\Gamma_L=\Gamma_{L'}$ for all $L'\in \Sigma$. 
Since the scrolls $X_{L'}$ are algebraically equivalent to each other, the rulings on them cut out 
a $(\rho +1)$-dimensional family of algebraically equivalent rational curves on $\Gamma_{L}$, 
the focal curves. 
(We can also argue that all $d$-secant to $C$ are algebraically equivalent, 
thus the intersection with $\Gamma_L$ yields a $(\rho +1)$-dimensional family of 
algebraically equivalent focal curves.) 
On the desingularization of $\Gamma_L$, all of them are linear equivalent 
since $\Gamma_L$ is regular ($H^1(\Gamma_L,\OO_{\Gamma_L})=0$).   
This implies that all $g^1_d$'s on $C$ are linear equivalent, hence $C$ has a $g^{\rho +1}_d$. 
A contradiction to the generality assumption on $C$. \hspace{9.6cm} \qed
\end{proof}

For the convenience of the reader, we recall the definition of the second-order foci of the family $\family$ (see also Definition \ref{defFociLoci}). We apply the theory of foci to the family $\underline{\Gamma}\subset S'\times \PP^{g-1}$ and get the characteristic map 
$$
\psi: T(p)|_{\underline{\Gamma}} \to \Ncal_{\underline{\Gamma}/S'\times \PP^{g-1}}
$$
of vector bundles of rank $\rho +1$ and $g-2$, respectively. 
For $s\in S'$, we call the closed subscheme of $\Gamma_s$ defined by 
$\rk(\psi_s)\leq k$ 
the \emph{scheme of second-order foci of rank $k$ at $s$} (of the family $\family$). 

We will show that the scheme of second-order foci of rank $1$ at $s\in S'$ of the family $\family$
is a finite scheme containing the divisor $D_s$ and compute its degree. 

\begin{lemma}
\label{connectionDimRank}
Let
$
\psi_s: T_{S',s}\otimes \OO_{\Gamma_s} \to \Ncal_{\Gamma_s/\PP^{g-1}}
$
be the characteristic map for general $s\in S'$. 
Then the rank of $\psi_s$ at a general point of $\Gamma_s$ is at least 2. 
\end{lemma}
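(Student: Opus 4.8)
The plan is to read off the lower bound on the generic rank of $\psi_s$ from the dimension estimate of Lemma \ref{dimensionF}, combined with the general identity relating the corank of a characteristic map to the fibre dimension of the second projection that was recorded in Section \ref{focitheory}. In other words, I would exploit the equivalence between ``$\psi_s$ has small rank'' and ``the curves $\Gamma_s$ sweep out a small image under $q$''.

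First I would collect the relevant dimensions. Since $\Sigma$ is the smooth locus of $W^1_d(C)$, which has dimension $\rho$, and $\alpha\colon S\to\Sigma$ is a $\PP^1$-bundle, the base satisfies $\dim S'=\rho+1$; as the general fibre $\Gamma_s$ of $p\colon\underline{\Gamma}\to S'$ is a (rational normal) curve, the total space has $\dim\underline{\Gamma}=\rho+2$. After shrinking $S'$ I may assume that $S'$, $\underline{\Gamma}$ and the curves $\Gamma_s$ are smooth, so that the theory of foci applies to $\underline{\Gamma}\subset S'\times\PP^{g-1}$ and all sheaves in the characteristic diagram are locally free near the general point. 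I would then apply the identity $\dim(\pi_2(F))=\dim(F)-\rank(\ker(\psi))$ from Section \ref{focitheory} to $F=\underline{\Gamma}$ with $\pi_2=q$. As $q(\underline{\Gamma})=\Gamma_{\PP^{g-1}}$ and $\dim\Gamma_{\PP^{g-1}}\ge 3$ by Lemma \ref{dimensionF}, this yields
$$
\rank(\ker(\psi))=\dim\underline{\Gamma}-\dim\Gamma_{\PP^{g-1}}\le(\rho+2)-3=\rho-1.
$$
Since the source $T(p)|_{\underline{\Gamma}}$ has rank $\rho+1$, the generic rank of $\psi$ equals $(\rho+1)-\rank(\ker(\psi))\ge 2$.

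Finally I would transfer this to the fibrewise map. Because $\underline{\Gamma}$ is irreducible, a general point of $\underline{\Gamma}$ lies over a general $s\in S'$ and is a general point of the fibre $\Gamma_s$; and $\psi_s$ is precisely the restriction of the global characteristic map $\psi$ to $\Gamma_s$. Hence the rank of $\psi_s$ at a general point of $\Gamma_s$ coincides with the generic rank of $\psi$, which is at least $2$, proving the lemma.

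The main obstacle is not the computation but the careful handling of the word ``general''. One must ensure that $\psi$ attains its maximal rank on a dense open subset of $\underline{\Gamma}$ (so that the corank computed from fibre dimensions really is the generic corank), and that this open set dominates $S'$ and meets the general fibre $\Gamma_s$. Both follow from upper-semicontinuity of the rank together with the irreducibility of $\underline{\Gamma}$, after possibly replacing $S'$ by a smaller dense open set; so no genuine difficulty arises beyond verifying these genericity statements and the smoothness hypotheses needed to invoke the focal identity.
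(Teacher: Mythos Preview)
Your proof is correct and follows essentially the same route as the paper: both invoke the identity $\dim(q(\underline{\Gamma}))=\dim(\underline{\Gamma})-\rank(\ker(\psi))$ from Section~\ref{focitheory} together with the bound $\dim\Gamma_{\PP^{g-1}}\ge 3$ of Lemma~\ref{dimensionF} to conclude that the generic rank of $\psi$ (and hence of $\psi_s$) is at least $2$. Your write-up is simply more explicit about the genericity and smoothness hypotheses than the paper's terse computation $\rk(\psi_s(p))=\dim(\Gamma_{\PP^{g-1}})-1$.
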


\begin{proof}
We recall the connection of the rank and the dimension of $\Gamma_{\PP^{g-1}}$ as in \cite[page 6]{CS10}. 
Since $\dim(\Gamma_{\PP^{g-1}}) = \rho + 2 - \rk(\ker(\psi))$, the rank of $\psi_s$ at the general point $p\in \Gamma_s$ is 
\begin{eqnarray*}
\rk(\psi_s(p)) & = & \dim(T(p)|_{\underline{\Gamma}}) - \rk(\ker(\psi)) \\
 & = & \rho + 1 - \rk\ker(\psi)) \\
 & = & \dim(\Gamma_{\PP^{g-1}}) - 1.
\end{eqnarray*}
The lemma follows from Lemma \ref{dimensionF}. 
This fact is also shown in \cite[page 98]{CC93}. \hspace{0.5cm} \qed
\end{proof}

We now consider for a general $s\in S'$ the rank one locus of $\psi_s$ 
which is a proper subset of $\Gamma_s$ by Proposition \ref{connectionDimRank}.

\begin{lemma}
 The degree of $\ V(\psi_s)_1\subset \Gamma_s=V(\chi_s)_1$ is at most $d + \rho$.
\end{lemma}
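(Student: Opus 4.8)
The plan is to transfer the whole question to the rational normal curve $\Gamma_s\cong\PP^1$ and to bound the length of the rank-one locus by the first Chern class of the (saturated) image sheaf of $\psi_s$. First I would make the ambient data explicit on $\Gamma_s$: since $\Gamma_s$ is the rational normal curve of degree $d-2$ spanning $\Lambda_s=\PP^{d-2}\subset\PP^{g-1}$, combining the Euler sequence of $\PP^{g-1}$ with $0\to T_{\Gamma_s}\to T_{\PP^{g-1}}|_{\Gamma_s}\to\Ncal_{\Gamma_s/\PP^{g-1}}\to 0$ gives $\deg\Ncal_{\Gamma_s/\PP^{g-1}}=g(d-2)-2$ and a filtration with graded pieces $\OO_{\PP^1}(d)^{\oplus(d-3)}$ (the normal directions inside $\Lambda_s$) and $\OO_{\PP^1}(d-2)^{\oplus n}$, $n=g-d+1$ (the normal directions of $\Lambda_s$ in $\PP^{g-1}$). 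In these terms $\psi_s\colon\OO_{\PP^1}^{\oplus(\rho+1)}\to\Ncal_{\Gamma_s/\PP^{g-1}}$ is a collection of $\rho+1$ sections, one for each tangent direction in $T_{S',s}$.

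The key reduction is the following. By Lemma \ref{connectionDimRank} the generic rank of $\psi_s$ equals $\dim(\Gamma_{\PP^{g-1}})-1\ge 2$; I will treat the main case in which it equals $2$ (if it is larger the rank-one locus only shrinks and the same bound holds a fortiori). Let $\mathcal I=\im(\psi_s)\subseteq\Ncal_{\Gamma_s/\PP^{g-1}}$ and let $\bar{\mathcal I}$ be its saturation, a rank-two subbundle $\OO_{\PP^1}(a)\oplus\OO_{\PP^1}(b)$. Factoring $\psi_s$ through $\bar{\mathcal I}$ and choosing a rank-two free subsheaf of $\OO_{\PP^1}^{\oplus(\rho+1)}$ mapping generically isomorphically onto $\bar{\mathcal I}$, the corresponding $2\times 2$ minor is a nonzero section of $\det\bar{\mathcal I}=\OO_{\PP^1}(a+b)$. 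Since $V(\psi_s)_1$ is the common zero locus of all $2\times 2$ minors, it is contained in the zero locus of this one, whence $\deg V(\psi_s)_1\le a+b=c_1(\bar{\mathcal I})$.

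It then remains to compute $c_1(\bar{\mathcal I})=d+\rho$, which is the heart of the argument. Here I would use the decomposition of $T_{S',s}$ into the vertical direction $\theta_0$ tangent to the fibre $\alpha_d^{-1}(L)\cong\PP^1$ and the $\rho$ horizontal directions tangent to $\Sigma=W^1_d(C)$. The image of $\theta_0$ is the characteristic section of the motion of $\Gamma_s$ inside the swept surface $\Gamma_L$, so it factors through $\Ncal_{\Gamma_s/\Gamma_L}$ and accounts for one line-bundle summand of $\bar{\mathcal I}$; the horizontal directions, corresponding to varying the pencil $L$ over $\Sigma$, account for the second. Matching this with the inclusion $D_s\subset V(\psi_s)_1$ (the analogue of Proposition \ref{curveInFocalScheme}, which forces a summand of degree $\ge\deg D_s=d$) and with the degree of the vertical characteristic, I expect $a+b=d+\rho$, the divisor $D_s$ contributing the summand of degree $d$ and the residual second-order foci the remaining $\rho$.

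The main obstacle is precisely this Chern-class computation of the image sheaf. The rank-one locus is far from its expected codimension $\rho(g-3)$ in $\Gamma_s$, so Thom--Porteous gives nothing, and the naive bound $c_1\big(\OO_{\PP^1}(d-2)^{\oplus 2}\big)=2d-4$ coming from any minimal rank-two \emph{quotient} bundle is too weak (indeed $d+\rho<2d-4$ for $d\le g-2$). Getting the sharp value therefore requires pinning down the splitting type $\OO_{\PP^1}(a)\oplus\OO_{\PP^1}(b)$ of the saturated \emph{image} $\bar{\mathcal I}$ itself, which rests on a precise analysis of the vertical section $\psi_s(\theta_0)$ through the geometry of the rational surface $\Gamma_L$ (using $H^1(\Gamma_L,\OO_{\Gamma_L})=0$ from Lemma \ref{dimensionF}) and on controlling the horizontal deformations coming from $T_L\Sigma$. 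The remaining point is to verify that at least one $2\times 2$ minor does not vanish identically, equivalently that the generic rank is genuinely $2$; this follows from Lemma \ref{connectionDimRank} together with $\dim\Gamma_{\PP^{g-1}}\ge 3$.
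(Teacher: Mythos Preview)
Your overall strategy---bound $\deg V(\psi_s)_1$ by the degree of a single nonvanishing $2\times 2$ minor of $\psi_s$ on $\Gamma_s\cong\PP^1$---is exactly what the paper does. But the decisive step, producing a minor of degree $d+\rho$, is left open in your proposal, and the mechanism you suggest for it (splitting $T_{S',s}$ into vertical and horizontal parts and computing $c_1$ of the saturated image) is not how the argument actually closes.

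The paper decomposes $\psi_s$ along the \emph{target}, not the source. Using
\[
\Ncal_{\Gamma_s/\PP^{g-1}}=\bigl(\Ncal_{\Lambda_s/\PP^{g-1}}\big|_{\Gamma_s}\bigr)\oplus\Ncal_{\Gamma_s/\Lambda_s}
=\OO_{\Gamma_s}(d-2)^{\oplus n}\oplus\OO_{\Gamma_s}(d)^{\oplus(d-3)},
\]
one writes $\psi_s=\begin{pmatrix}A\\B\end{pmatrix}$ with $A$ of size $n\times(\rho+1)$ and $B$ of size $(d-3)\times(\rho+1)$. The point you are missing is that $A$ is nothing but $\chi_s$ restricted to $\Gamma_s$. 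Since $\Gamma_s$ is by definition the rank-one locus of $\chi_s$, and since $\chi_s$ is equivalent to a catalecticant matrix by Theorem~\ref{EH92Prop5.1}, the block $A$ has rank one everywhere on $\Gamma_s$ and, in coordinates $s,t$ on $\PP^1$, every row of $A$ is a scalar multiple (by a form of degree $n-1$) of the single row $(t^{\rho},t^{\rho-1}s,\ldots,s^{\rho})$. Hence the rank-one locus of $\psi_s$ coincides with the rank-one locus of
\[
N=\begin{pmatrix} t^{\rho} & t^{\rho-1}s & \cdots & s^{\rho}\\[2pt] \multicolumn{4}{c}{B}\end{pmatrix},
\]
and a $2\times 2$ minor taken between the first row (degree $\rho$) and any row of $B$ (degree $d$) has degree $\rho+d$. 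By Lemma~\ref{connectionDimRank} the generic rank of $\psi_s$ is at least $2$, so some such minor is nonzero; this yields the bound directly, without ever needing the generic rank to be exactly $2$ or computing $c_1(\bar{\mathcal I})$.

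Two specific problems with your route: first, your reduction to $c_1(\bar{\mathcal I})$ only gives the desired bound when the generic rank is exactly $2$; if it is larger, $c_1(\bar{\mathcal I})$ can exceed $d+\rho$ and your inequality $\deg V(\psi_s)_1\le c_1(\bar{\mathcal I})$ is too weak. Second, the heuristic ``$D_s$ contributes a summand of degree $d$ and the vertical direction the remaining $\rho$'' is not the correct bookkeeping: in the paper's analysis the degree-$\rho$ contribution comes from the catalecticant row replacing \emph{all} of $A$, while the degree-$d$ part comes from the entries of $B$ living in $\Ncal_{\Gamma_s/\Lambda_s}\cong\OO(d)^{\oplus(d-3)}$. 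Recognising $A=\chi_s|_{\Gamma_s}$ and exploiting its catalecticant form is the step that makes the computation immediate.
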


\begin{proof}
 We imitate the proof of \cite[Theorem 3]{CS00}. 
 Let $s\in S'\subset$ be a general point and let $\Gamma_s\subset \PP^{d-2}=\Lambda_s$ 
 be the rank $1$ locus of the map 
 $$
 \chi_s: T_{S',s} \otimes \OO_{\Lambda_s} \to \Ncal_{\Lambda_s/\PP^{g-1}}.
 $$
 Note that the normal bundle of $\Gamma_s$ splits 
 $$
 \Ncal_{\Gamma_s/\PP^{g-1}} = (\Ncal_{\Lambda_s/\PP^{g-1}}\otimes \OO_{\Gamma_s}) 
 \oplus \Ncal_{\Gamma_s/\Lambda_s} 
 = \OO_{\Gamma_s}(d-2)^{\oplus n} \oplus \OO_{\Gamma_s}(d)^{\oplus d-3}. 
 $$
 Hence, the map $\psi_s$ is given by a matrix 
 $$
 \psi_s =
 \begin{pmatrix}
	A \\ B
 \end{pmatrix}
 $$
 where $A$ is a $n\times (\rho +1)$-matrix and $B$ is a $(d-3) \times (\rho +1)$-matrix. 
 The matrix $A$ represents the map 
 $(\chi_s: T_{S,s}\otimes \OO_{\Lambda_s} \to \Ncal_{\Gamma_s/\PP^{g-1}})|_{\Gamma_s}$ 
 and therefore has rank $1$ and is equivalent to a catalecticant matrix. 
 Let $\{s,t\}$ be a basis of $H^0(\Gamma_s,\OO_{\Gamma_s}(1))$. In an appropriate basis, 
 the matrix $A$ is of the following form 
 \begin{eqnarray*}
  A  =  \begin{pmatrix}
	t^{d-2} & t^{d-3} s & \cdots & t^{d-2-\rho} s^{\rho} \\ 
	t^{d-3}s & \ddots & & t^{d-2-\rho -1} s^{\rho +1} \\ 
	\vdots &  & &  \vdots \\
	t^{d-2-n+1}s^{n-1} & t^{d-2-n} s^{n} & \cdots & s^{d-2}
 \end{pmatrix}  
   = \begin{matrix}  t^{n-1}\cdot \\ t^{n-1}s \cdot \\ \vdots \\ s^{n-1}\cdot \end{matrix} 
 \begin{pmatrix}
	t^{\rho} & t^{\rho-1} s & \cdots & s^{\rho} \\ 
	t^{\rho} & \ddots & & s^{\rho} \\ 
	\vdots &  & &  \vdots \\
	t^{\rho} & t^{\rho-1} s & \cdots & s^{\rho}
 \end{pmatrix}
 .
 \end{eqnarray*}
 We see that the rank $1$ locus of $\psi_s$ is the rank $1$ locus of the following matrix 
 \begin{eqnarray*}
 N=
 \begin{pmatrix}
	t^{\rho} & t^{\rho -1} s & \cdots & s^{\rho}\\
	 &  B
 \end{pmatrix}.
 \end{eqnarray*}
 Since $V(\psi_s)_1\neq \Gamma_s$ by Lemma  \ref{connectionDimRank}, 
 we have 
 $$
 \deg(V(\psi_s)_1) = \deg(D_1(N)) \leq 
 \mathrm{min}\{\mathrm{ degree\ of\ elements\ of\ } I_{2\times 2}(N)\} \le \rho + d.
 $$
 \hspace{11cm} \qed
\end{proof}


\begin{proposition}
\label{RNCintersectionVertex}
 Let $s\in L$ be a sufficiently general point. 
 Then, $V(\psi_s)_1$ is the union of $D_s$ and $\rho$ points which are the intersection of 
 $\Gamma_s=V(\chi_s)_1$, and the vertex $V$ of the scroll $X_L$ swept out by the pencil $|L|$. 
\end{proposition}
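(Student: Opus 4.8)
The plan is to squeeze $V(\psi_s)_1$ between the bound $\deg V(\psi_s)_1 \le d+\rho$ of the preceding lemma and two explicit subschemes of $\Gamma_s$ that I will show are contained in it: the divisor $D_s$, of length $d$, and the intersection $\Gamma_s\cap V$, which I will show has length $\rho$. Since these two pieces are disjoint and their lengths already sum to $d+\rho$, the bound must be attained and $V(\psi_s)_1$ must equal their union, which is precisely the asserted description.

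First I would handle $D_s$. Arguing exactly as in Proposition \ref{curveInFocalScheme}, each point $p\in\Supp(D_s)\subset C$ lies on a codimension-one subfamily of the divisors parametrised by $S'$, namely those passing through $p$; since $D_{s'}\subset\Gamma_{s'}$ for every $s'$, a codimension-one family of the rational normal curves $\Gamma_{s'}$ passes through $p$, so the global characteristic map $\psi$ of $\underline{\Gamma}$ drops rank at $p$ and $p\in V(\psi_s)_1$. I would also record that, for general $s$, the support of $D_s$ is disjoint from the vertex $V$: the points of $D_s$ are general points of $C$, whereas, as in Case~2(b) of Lemma \ref{1genericity}, the vertex $V=T_L(W^1_d(C))$ is swept out by the images $[\delta(\theta)]$ of the tangent directions to $W^1_d(C)$ at $L$. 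Hence $D_s\subset V(\psi_s)_1$ contributes a reduced length-$d$ subscheme meeting $V$ trivially.

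The heart of the matter is the vertex contribution. Using the normal form of the preceding lemma, $V(\psi_s)_1=D_1(N)$ with
$$N=\begin{pmatrix} t^{\rho} & t^{\rho-1}s & \cdots & s^{\rho}\\ & B \end{pmatrix},$$
where the first row is a basis of forms of degree $\rho$ and $B$ has entries of degree $d$ on $\Gamma_s$. Because the first row never vanishes, $\rank N\le 1$ at a point forces every row of $B$ to be proportional to it there, and the governing $2\times2$ minors have degree $d+\rho$; as they already vanish on $D_s$, they are divisible by the degree-$d$ equation of $D_s$, and the residual scheme is cut out by the common degree-$\rho$ factor, of length $\le\rho$. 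I claim this residual scheme is precisely $\Gamma_s\cap V$. Geometrically this is the statement that the points where $\Gamma_s$ meets the vertex of the scroll $X_L$ are second-order foci: at such a point $p$, since $V=\bigcap_{s'}\Lambda_{s'}$ is common to every ruling of $X_L$ and coincides with the image of the $W^1_d(C)$-directions under $[\delta]$, a codimension-one subspace of $T_{S',s}$ -- the pencil direction together with the tangent directions to $W^1_d(C)$ transverse to $p$ -- produces no normal displacement of $\Gamma_s$ at $p$, so $\rank\psi_s(p)\le 1$. This gives $\Gamma_s\cap V\subset V(\psi_s)_1$ and shows that the residual scheme is supported on $\Gamma_s\cap V$.

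The step I expect to be the main obstacle is exactly this identification: showing that the algebraically defined residual rank-one locus of $N$ -- the common degree-$\rho$ factor of its minors after removing $D_s$ -- cuts out on $\Gamma_s\cong\PP^1$ the same length-$\rho$ scheme as the $n$ linear forms defining $V\subset\Lambda_s$, i.e.\ that the rank drop of $\psi_s$ away from $D_s$ is governed precisely by incidence with the vertex. This is where the deformation-theoretic meaning of $V$ must be used in earnest, and it is the point at which the intersection $\Gamma_s\cap V$, whose expected dimension is negative, is forced to consist of $\rho$ honest points. Granting it, $D_s$ and $\Gamma_s\cap V$ are disjoint of total length $d+\rho$; combined with $\deg V(\psi_s)_1\le d+\rho$ this forces equality, and $V(\psi_s)_1=D_s\cup(\Gamma_s\cap V)$ as claimed.
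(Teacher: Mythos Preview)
Your squeeze strategy and the treatment of $D_s$ match the paper exactly. The genuine gap is the one you yourself flag: you never establish that $\Gamma_s\cap V$ consists of $\rho$ points. Since $V\subset\Lambda_s$ is a $(\rho-1)$-plane and $\Gamma_s$ is a curve in $\PP^{d-2}$, the expected dimension of the intersection is $1-n<0$, so without an additional argument one cannot even conclude that $\Gamma_s$ meets $V$ at all. Your proposed geometric argument for the inclusion $\Gamma_s\cap V\subset V(\psi_s)_1$ does not help with this, and is itself not convincing: it is not clear why the pencil direction should produce no normal displacement of $\Gamma_s$ (as opposed to $\Lambda_s$) at a point of $V$, and the phrase ``tangent directions to $W^1_d(C)$ transverse to $p$'' has no evident meaning that would force the corresponding columns of $\psi_s$ to vanish there.

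The paper closes this gap by a different, purely algebraic, observation (following \cite[Proposition~4.2]{CS95}). Each column of $\chi_s$ is a section of $V/U\otimes\OO_{\Lambda_s}(1)$; by $1$-genericity its $n$ linear components are independent, so its zero locus in $\Lambda_s$ is a $(\rho-1)$-plane, and the catalecticant normal form of $\chi_s$ shows that the restriction of this section to $\Gamma_s$ vanishes in degree $\rho$, i.e.\ the plane is a $\rho$-secant to $\Gamma_s$. One then identifies \emph{which} column cuts out $V$: the column indexed by the pencil tangent vector $\theta_0\in T_{S,s}$ encodes the infinitesimal deformation of $\Lambda_s$ inside the scroll $X_L$, and since every ruling of $X_L$ contains the vertex $V$, this section vanishes along $V$. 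Hence $V$ itself is one of these $\rho$-secant $(\rho-1)$-planes, and $|\Gamma_s\cap V|=\rho$. This is the idea your argument is missing; once you have it, the degree bound $\deg V(\psi_s)_1\le d+\rho$ from the preceding lemma finishes the proof exactly as you outlined.
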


\begin{proof}
As in the proof of Proposition \ref{curveInFocalScheme}, one can show that
the points in the support of $D_s$ are contained in $V(\psi_s)_1$.

Next, we show that the vertex in $\Lambda_s$ is given by a column of the matrix $\chi_s$. 
Again, we imitate the proof of \cite[Proposition 4.2]{CS95}. 
Each column of the $n\times (\rho +1)$-matrix $\chi_s$ is a section of the rank $n$ vector bundle 
$V/U\otimes \OO_{\Lambda_s}(1)$ (where $U\subset V $ is the affine subspace representing $\Lambda_s$)
corresponding to an infinitesimal deformation of $\Lambda_s$.  
Each section vanishes in a $\rho-1 = (d-2-n)$-subspace of $\Lambda_s$ 
which is a $\rho$-secant of $\Gamma_s$. 
Since $\chi_s$ is $1$-generic, we get a $(\rho +1)$-dimensional family of infinitesimal deformations of $\Lambda_s$ 
induced by all columns. 
Hence, one column corresponds to the deformation in the scroll $X_L$. 
The corresponding section vanishes at the vertex. \qed
\end{proof}

As in the case $V(\chi_s)_1=D_s$, we get the following Torelli-type theorem using Remark \ref{familyAsRulingOfTangentCones}. 

\begin{corollary}
\label{corollaryRecovering}
 A Brill--Noether general canonically embedded curve $C$ is uniquely determined by the family $\family$. 
 More precise, the canonical curve $C$ is a component of the scheme of first- or second-order foci 
 of the family $\family$ induced by the Brill--Noether locus $W_d(C)$ and (the smooth locus of) its singular locus $W^1_d(C)$ of dimension at least one (equivalently $\lceil \frac{g+3}{2} \rceil \le d\le g-1$). 
\end{corollary}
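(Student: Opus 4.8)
The plan is to exploit the dichotomy established in Corollary \ref{rankOneLocusFocalMap}: for a sufficiently general $s\in S$, the rank one locus $V(\chi_s)_1$ is either the divisor $D_s$ itself or a rational normal curve $\Gamma_s$ passing through $D_s$. In the first case the argument is immediate. As $s$ ranges over $S$, the supports $\Supp(D_s)$ are by construction $d$-tuples of points on $C$, and since the $d$-secant spaces $\Lambda_s$ through a fixed point of $C$ form a codimension one family (as in the proof of Proposition \ref{curveInFocalScheme}), the map sending $s$ to the points of $\Supp(D_s)$ is dominant onto $C$; hence the closure $\overline{\bigcup_{s} \Supp(D_s)}$ is exactly the canonical curve. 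By Proposition \ref{curveInFocalScheme} this closure lies in the scheme of first-order foci $V(\chi)_1$, and since each general fiber equals $D_s$, it is a one-dimensional irreducible component of $V(\chi)_1$. Thus $C$ is recovered as a component of the first-order focal scheme.

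In the second case the fibers $\Gamma_s$ are rational normal curves, so the first-order focal scheme is too large and I would pass to second-order foci. Here the essential work has already been carried out: by Proposition \ref{RNCintersectionVertex}, for general $s$ the rank one locus $V(\psi_s)_1$ of the second-order focal map decomposes as the union of $D_s$ and exactly $\rho$ points lying on the vertex $V=T_L(W^1_d(C))$ of the scroll $X_L$. Exactly as above, letting $s$ vary over $\alpha_d^{-1}(L)$ and $L$ over $\Sigma$, the divisorial parts $D_s$ sweep out the canonical curve $C$, which therefore sits inside the scheme of second-order foci $V(\psi)_1$; the remaining $\rho$ points of each fiber lie on the vertices of the scrolls and trace out a locus disjoint from the generic behaviour of $C$. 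I would conclude that $C$ is again an irreducible one-dimensional component, this time of $V(\psi)_1$.

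Finally, to upgrade this to a genuine Torelli-type reconstruction, I would invoke Remark \ref{familyAsRulingOfTangentCones}: the family $\family$ is intrinsically recovered from the Brill--Noether locus $W_d(C)$ together with its singular locus $W^1_d(C)$, namely as the ruling of the projectivised tangent cones to $W_d(C)$ along $W^1_d(C)\setminus W^2_d(C)$. Since both focal constructions above are determined by $\family$ alone, the canonical curve $C$ is determined by $W_d(C)$ and $W^1_d(C)$. The numerical constraint is just the unpacking of the standing hypotheses: $\dim W^1_d(C)=\rho=2d-g-2\ge 1$ is equivalent to $d\ge\lceil\tfrac{g+3}{2}\rceil$, which together with $d\le g-1$ gives the stated range.

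The main obstacle I expect is verifying, in the second case, that $C$ is genuinely a component of $V(\psi)_1$ rather than being absorbed into the locus swept out by the vertex points. Concretely, one must check that as $L$ varies over $\Sigma$ and $s$ over $\alpha_d^{-1}(L)$, the union of vertices $V=T_L(W^1_d(C))$ does not contain $C$. This is where the generality of $C$ and the fact (Lemma \ref{dimensionF}) that $\Gamma_{\PP^{g-1}}$ has dimension at least three are used to keep the two loci separated: the $D_s$ contribute a one-dimensional family intrinsic to the curve, while the vertex points vary in a genuinely different family. Once this separation is secured, identifying $C$ as the intrinsic one-dimensional part carrying the divisors $D_s$ is routine.
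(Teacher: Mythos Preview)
Your proposal is correct and follows essentially the same approach as the paper, which in fact states the corollary without explicit proof, relying on the dichotomy of Corollary \ref{rankOneLocusFocalMap}, the earlier corollary for the case $V(\chi_s)_1=D_s$, Proposition \ref{RNCintersectionVertex} for the rational normal curve case, and Remark \ref{familyAsRulingOfTangentCones} for the link to $W_d(C)$ and $W^1_d(C)$. One small remark: the role you assign to Lemma \ref{dimensionF} in your final paragraph is slightly off---in the paper that lemma feeds into Lemma \ref{connectionDimRank} to ensure $V(\psi_s)_1\subsetneq\Gamma_s$, not to separate $C$ from the vertex locus; the paper does not explicitly address that separation, so your caution there is well placed but not something the paper resolves beyond the description in Proposition \ref{RNCintersectionVertex}.
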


\section{The first-order focal map}
\label{firstorderFocalMap}

For a general curve $C$ and a sufficiently general point $s\in S$, 
the rank one locus of the focal map $\chi_s$ at $s$ is either $d$ points or a rational normal curve. In the second case, the focal matrix at $s$ is catalecticant (see Corollary \ref{rankOneLocusFocalMap}). 

As mentioned above, the articles \cite{CS95} and \cite{CS00} of Ciliberto and Sernesi 
are the extremal cases ($d=g-1$ and $\rho=1$, respectively), 
where the rank one locus is always a rational normal curve. 
We propose the following question. 

\begin{question}
\label{rank1locusGeneralCurve}
When is the focal matrix $\chi_s$ catalecticant for a general curve $C$ and a sufficiently general point $s\in S$?
\end{question}

We conjecture that only in the extremal cases $d=g-1$ and $\rho=1$ the rank one locus of $\chi_s$ 
is a rational normal curve for a general curve $C$ and a general point $s\in S$. For the rest of this section we explain the reason for our conjecture. 

Let $C\subset \PP^{g-1}$ be a canonically embedded curve of genus $g$ and let $L\in W^1_d(C)$ be a smooth point such that the rank one locus of the focal matrix $\chi_s:T_{S,s}\otimes \OO_{\Lambda_s} \to \Ncal_{\Lambda_s/\PP^{g-1}}$ is a rational normal curve $\Gamma_s$ in $\PP^{d-2}$ for $s\in |L|$ sufficiently general. Let $X_L=\bigcup_{s\in |L|} \overline{D_s}$ be the scroll swept out by the pencil $|L|$. We get a rational surface 
$$\
\Gamma_L=\overline{\bigcup_{s\in |L| \mathrm{\ gen}} \Gamma_s}\subset X_L
$$ 
defined as in the previous section. The rational normal curve $\Gamma_s$ intersects the vertex $V$ of $X_L$ in $\rho=\rho(g,d,1)$ points by Proposition \ref{RNCintersectionVertex}. Note that the scroll $X_L$ is a cone over $\PP^1\times \PP^{h^1(C,L)-1}$ with vertex $V$. Hence, projection from the vertex $V$ yields a rational surface in $\PP^1\times \PP^{h^1(C,L)-1}$ whose general fiber in $\PP^{h^1(C,L)-1}$ is again a rational normal curve. 
We have shown the following proposition. 

\begin{proposition}
\label{curveOnRationalSurfaceSmallDegree}
 Let $C\subset \PP^{g-1}$ be a canonically embedded curve of genus $g$ and let $L\in W^1_d(C)$ be a smooth point such that the rank one locus of the focal matrix $\chi_s$ is a rational normal for $s\in |L|$ sufficiently general. Then, the image of $C$ in $\PP^1\times \PP^{h^1(C,L)-1}$ given by $|L|\times |\omega_C\otimes L^{-1}|$ lies on a rational surface of bidegree $(d', h^1(C,L)-1)$ for some $d'$. 
\end{proposition}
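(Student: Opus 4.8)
The plan is to recognise the product map $|L|\times|\omega_C\otimes L^{-1}|$ as the projection of the canonical curve from the vertex $V$ of the scroll $X_L$, and then to transport the ruling of the rational surface $\Gamma_L$ by rational normal curves down to $\PP^1\times\PP^{h^1(C,L)-1}$. Throughout I set $n:=h^1(C,L)=h^0(C,\omega_C\otimes L^{-1})$, so that $d=\rho+n+1$ and $\Lambda_s=\PP^{d-2}$. First I would note that $C$ already lies on $\Gamma_L$: since $|L|$ is base point free (Lemma~\ref{genAssumptionOnC}), every $p\in C$ lies in the support of some $D_s$, $s\in|L|$, and hence on the ruling $\Lambda_s\subset X_L$, while $D_s\subset\Gamma_s=V(\chi_s)_1$ by Corollary~\ref{rankOneLocusFocalMap}; thus $p\in\Gamma_s\subset\Gamma_L\subset X_L$.

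Next I would identify the projection $\pi_V$ from the vertex with the product map along $C$. Put $U:=\mathrm{im}\big(\mu\colon H^0(L)\otimes H^0(\omega_C\otimes L^{-1})\to H^0(\omega_C)\big)$. For $s\in|L|$ with $D_s=\mathrm{div}(s_0)$, $s_0\in H^0(L)$, the hyperplanes of $\PP^{g-1}$ containing the ruling $\Lambda_s=\overline{D_s}$ are exactly $s_0\cdot H^0(\omega_C\otimes L^{-1})$, so summing over $s$ gives $U=\sum_s H^0(\omega_C(-D_s))$ with common base locus $\bigcap_s\Lambda_s=V$; hence the linear subsystem $U$ defines precisely the projection $\pi_V\colon\PP^{g-1}\dashrightarrow\PP(U^{*})$. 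Since $L$ is a smooth point we have $h^0(L)=2$, and the base point free pencil trick together with Lemma~\ref{genAssumptionOnC} gives $\ker\mu\cong H^0(\omega_C\otimes L^{-2})\cong H^1(C,L^2)^{*}=0$; thus $\mu$ is injective, $\dim U=2n$, and $\PP(U^{*})=\PP^{2n-1}$ is the Segre ambient of $X_L$. Identifying $U\cong H^0(L)\otimes H^0(\omega_C\otimes L^{-1})$ through $\mu$, the image $\pi_V(p)=[\ev_p|_U]$ is the functional $s\otimes t\mapsto s(p)\,t(p)$, that is, the Segre image of $\big(\varphi_L(p),\varphi_{\omega_C\otimes L^{-1}}(p)\big)$; so $\pi_V|_C$ is the Segre composite of $|L|\times|\omega_C\otimes L^{-1}|$.

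Finally I would study the image surface $Y:=\pi_V(\Gamma_L)\subset\PP^1\times\PP^{n-1}$. It is rational because $\Gamma_L$ is, its map to $\alpha_d^{-1}(L)=\PP^1$ having rational fibres as in the proof of Lemma~\ref{dimensionF}. For general $s$ the fibre of $Y$ over $s$ is $\pi_V(\Gamma_s)$, and since $V=\PP^{\rho-1}$ is spanned by the $\rho$ points $\Gamma_s\cap V$ of Proposition~\ref{RNCintersectionVertex}, projecting the rational normal curve $\Gamma_s\subset\PP^{d-2}$ of degree $d-2$ from these points yields a rational normal curve of degree $(d-2)-\rho=n-1$ in $\PP^{n-1}$. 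Hence $Y$ is a rational surface ruled over $\PP^1$ by rational normal curves of degree $h^1(C,L)-1$, so of bidegree $(d',h^1(C,L)-1)$ for some $d'$; and because $C\subset\Gamma_L$, the image of $C$ under $|L|\times|\omega_C\otimes L^{-1}|=\pi_V|_C$ lies on $Y$, as claimed.

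The hard part will be the middle step: matching the vertex $V$ of $X_L$ with the base locus $\PP(U^{\perp})$ of the residual multiplication image, and thereby recognising the second Segre factor of $\pi_V$ as the residual system $|\omega_C\otimes L^{-1}|$. Everything there hinges on the injectivity of $\mu$, which is exactly what the vanishing $H^1(C,L^2)=0$ of Lemma~\ref{genAssumptionOnC} provides; the remaining fibre-degree computation is then the routine behaviour of rational normal curves under projection from points lying on them.
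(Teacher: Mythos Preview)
Your argument is correct and follows the same route as the paper: project the rational surface $\Gamma_L\subset X_L$ from the vertex $V$ to $\PP^1\times\PP^{n-1}$, observe that each $\Gamma_s$ maps to a rational normal curve of degree $n-1=(d-2)-\rho$ because of the $\rho$ points $\Gamma_s\cap V$ from Proposition~\ref{RNCintersectionVertex}, and identify $\pi_V|_C$ with $|L|\times|\omega_C\otimes L^{-1}|$. The paper's proof merely asserts this last identification, whereas you supply the justification via the multiplication map $\mu$ and the base point free pencil trick (using $H^1(C,L^2)=0$ from Lemma~\ref{genAssumptionOnC}); this is a welcome addition, since the paper leaves that step entirely to the reader.
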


\begin{proof}
 The proposition follows from the preceding discussion. We only note that the map given by $|L|\times |\omega_C\otimes L^{-1}|$ is the same as the projection of $\PP^{g-1}$ along the vertex $V$ of the canonically embedded $C$.  \hspace{5.8cm} \qed
\end{proof}

\begin{example}
 We explain the above circumstance for a curve $C$ of genus $8$ with a line bundle $L\in W^1_6(C)$. The residual line bundle $\omega_C\otimes L^{-1}$ has degree $8$ and $H^0(C,\omega_C\otimes L^{-1})$ is three-dimensional. Let $C'$ be the image of $C$ in $\PP^1\times \PP^2$ given by $|L|\times |\omega_C\otimes L^{-1}|$. We think of $C'\to \PP^1$ as a one-dimensional family of six points in the plane. If our assumption of Proposition \ref{curveOnRationalSurfaceSmallDegree} is true, the six points lie on a conic in every fiber over $\PP^1$. 
 Computing a curve of genus $8$ with a $g^1_6$ in \texttt{Macaulay2} shows that these conics do not exist. Hence, our assumption of Proposition \ref{curveOnRationalSurfaceSmallDegree}, that is, the rank one locus of the focal matrix $\chi_s$ is a rational normal curve for $s\in |L|$ sufficiently general, does not hold for a general curve.  
\end{example}

If $\rho(g,d,1)=2d-g-2\ge 2$ and $d<g-1$, we do not expect the existence of such a rational surface for a curve of genus $g$ and a line bundle of degree $d$ as above. Indeed, $m$ general points in $\PP^{r}$ do not lie on a rational normal curve if $m>r+3$. But the inequality $\rho(g,d,1)=2d-g-2\ge 2$ implies $d>(h^1(C,L)-1)+3$.
Using our \texttt{Macaulay2} package (see \cite{BH}), we could show in several examples ($(g,d)=(8,6),(9,7),(10,8),(9,6)$) that the rational surface of bidegree $(d', h^1(C,L)-1)$ of Proposition \ref{curveOnRationalSurfaceSmallDegree} does not exist. This confirms our conjectural behaviour of the first-order focal map.

\begin{acknowledgement}
I would like to thank Edoardo Sernesi for sharing his knowledge about focal schemes and 
for valuable and enjoyable discussions while my visit at the university Roma Tre. 
The author was partially supported by the DFG-Grant SPP 1489 Schr. 307/5-2.
\end{acknowledgement}


\begin{thebibliography}{99.}%
%
\bibitem{ACGH}
Enrico~Arbarello, Maurizio~Cornalba, Phillip~A. Griffiths, and Joseph~Harris.
\newblock {\em Geometry of algebraic curves. {V}ol. {I}}, volume 267 of {\em
  Grundlehren der Mathematischen Wissenschaften [Fundamental Principles of
  Mathematical Sciences]}.
\newblock Springer-Verlag, New York, 1985.
%
\bibitem{Baj10}
Ali Bajravani.
\newblock Focal varieties of curves of genus 6 and 8.
\newblock {\em Rend. Circ. Mat. Palermo (2)}, 59(1):127--135, 2010.
%
\bibitem{BH}
Christian~Bopp and Michael~Hoff.
\newblock Relative{C}anonical{R}esolution.m2 - construction of relative
  canonical resolutions and {E}agon-{N}orthcott type compexes, a
  \texttt{Macaulay2} package.
\newblock Available at
  \href{http://www.math.uni-sb.de/ag-schreyer/index.php/people/researchers/75-christian-bopp}{http://www.math.uni-sb.de/ag-schreyer/index.php/people/researchers/75-christian-bopp},
  2015.
%
\bibitem{CC93}
Luca Chiantini and Ciro Ciliberto.
\newblock A few remarks on the lifting problem.
\newblock {\em Ast\'erisque}, (218):95--109, 1993.
\newblock Journ{\'e}es de G{\'e}om{\'e}trie Alg{\'e}brique d'Orsay (Orsay,
  1992).
%
\bibitem{CS92}
Ciro Ciliberto and Edoardo Sernesi.
\newblock Singularities of the theta divisor and congruences of planes.
\newblock {\em J. Algebraic Geom.}, 1(2):231--250, 1992.
%
\bibitem{CS95}
Ciro Ciliberto and Edoardo Sernesi.
\newblock Singularities of the theta divisor and families of secant spaces to a
  canonical curve.
\newblock {\em J. Algebra}, 171(3):867--893, 1995.
%
\bibitem{CS00}
Ciro Ciliberto and Edoardo Sernesi.
\newblock On the geometry of canonical curves of odd genus.
\newblock {\em Comm. Algebra}, 28(12):5993--6001, 2000.
\newblock Special issue in honor of Robin Hartshorne. 
%
\bibitem{CS10}
Ciro~Ciliberto and Edoardo~Sernesi.
\newblock Projective geometry related to the singularities of theta divisors of
  {J}acobians.
\newblock {\em Boll. Unione Mat. Ital. (9)}, 3(1):93--109, 2010.
%
\bibitem{EH91}
David Eisenbud and Joe Harris.
\newblock An intersection bound for rank {$1$} loci, with applications to
  {C}astelnuovo and {C}lifford theory.
\newblock {\em J. Algebraic Geom.}, 1(1):31--59, 1992.
%
\bibitem{Eis88}
David Eisenbud.
\newblock Linear sections of determinantal varieties.
\newblock {\em Amer. J. Math.}, 110(3):541--575, 1988.
%
\bibitem{Ful84}
William Fulton.
\newblock {\em Intersection theory}, volume~2 of {\em Ergebnisse der Mathematik
  und ihrer Grenzgebiete (3) [Results in Mathematics and Related Areas (3)]}.
\newblock Springer-Verlag, Berlin, 1984.
%
\bibitem{PTiB}
Gian~Pietro Pirola and Montserrat Teixidor~i Bigas.
\newblock Generic {T}orelli for {$W^r_d$}.
\newblock {\em Math. Z.}, 209(1):53--54, 1992.
%
\bibitem{Ser06}
Edoardo Sernesi.
\newblock {\em Deformations of algebraic schemes}, volume 334 of {\em
  Grundlehren der Mathematischen Wissenschaften [Fundamental Principles of
  Mathematical Sciences]}.
\newblock Springer-Verlag, Berlin, 2006.
%
\end{thebibliography}
\end{document}